% !TEX TS-program = pdflatexmk
\documentclass[12pt,leqno,fleqn]{amsart}  
\usepackage{amsmath,amstext,amsthm,amssymb,amsxtra}
\usepackage[top=1.5in, bottom=1.5in, left=1.25in, right=1.25in]	{geometry}
\usepackage[normalem]{ulem}
\usepackage{txfonts} % pxfonts txfonts 
\usepackage[T1]{fontenc}
\usepackage{lmodern}
\usepackage{tikz}\usetikzlibrary{arrows}

\usepackage{euler}   % better than the option below

%\usepackage{sansmath}   and then use the command \sansmath 

%\includegraphics[width=0.15\textwidth]{./logo}

%\usepackage{tikz}
    
%%%%%%%%%%% widecheck 
\makeatletter
\DeclareRobustCommand\widecheck[1]{{\mathpalette\@widecheck{#1}}}
\def\@widecheck#1#2{%
    \setbox\z@\hbox{\m@th$#1#2$}%
    \setbox\tw@\hbox{\m@th$#1%
       \widehat{%
          \vrule\@width\z@\@height\ht\z@
          \vrule\@height\z@\@width\wd\z@}$}%
    \dp\tw@-\ht\z@
    \@tempdima\ht\z@ \advance\@tempdima2\ht\tw@ \divide\@tempdima\thr@@
    \setbox\tw@\hbox{%
       \raise\@tempdima\hbox{\scalebox{1}[-1]{\lower\@tempdima\box
\tw@}}}%
    {\ooalign{\box\tw@ \cr \box\z@}}}
\makeatother
%%%%%%%%%%%%%%

\usepackage{mathtools}
\mathtoolsset{showonlyrefs,showmanualtags}

\usepackage{hyperref} %,hypertexnames=false,colorlinks,[pagebackref]
\hypersetup{
%    bookmarks=true,         % show bookmarks bar?
%    unicode=false,          % non-Latin characters in AcrobatÕs bookmarks
%    pdftoolbar=true,        % show AcrobatÕs toolbar?
%    pdfmenubar=true,        % show AcrobatÕs menu?
%    pdffitwindow=false,     % window fit to page when opened
%    pdfstartview={FitP},    % fits the width of the page to the window
%    pdftitle={My title},    % title
%    pdfauthor={Author},     % author
%    pdfsubject={Subject},   % subject of the document
%    pdfcreator={Creator},   % creator of the document
%    pdfproducer={Producer}, % producer of the document
%    pdfkeywords={keywords}, % list of keywords
%    pdfnewwindow=true,      % links in new window
    colorlinks=true,       % false: boxed links; true: colored links
    linkcolor=blue,          % color of internal links
    citecolor=magenta,        % color of links to bibliography
    filecolor=magenta,      % color of file links
    urlcolor=cyan           % color of external links
%    pagebackref=true
}

\usepackage[msc-links]{amsrefs}

%% Apparently, labels in figures have to go AFTER the caption. 

\theoremstyle{plain} % definition 
\newtheorem{lemma}[equation]{Lemma} 
\newtheorem{proposition}[equation]{Proposition} 
\newtheorem{theorem}[equation]{Theorem}

\theoremstyle{definition}

\theoremstyle{remark}

\numberwithin{equation}{section}

%%%%%%%%%%  bold info 
%

%  \bm{ \infty}  will make the symbol bold 
%  \bm{x}  will make bold italic x 
%  \mathbf{x}   will make bold roman x 
%

%%%%%%%%%%%%%  fraktur font
%
%    \mathfrak{M}  for f
%
%

%%%%%%%%%%%%  alternate blackboard bold 
%
%  \DeclareMathAlphabet\mathbb{U}{fplmbb}{m}{n}
% 
%
%%%%%%%%%%%%  alternate script alphabet 
%
%  \DeclareMathAlphabet\mathscr{T1}{hlcw}{m}{it}
%
%

%%%%%%%%%%%%%%%%%%%%%%%%%%%%%  Title
\title  {Lacunary  Discrete Spherical Maximal Functions}
% \subjclass[2000]{Primary: 42B20 Secondary: 42B25, 42B35}
% \keywords{}3e
\author[R. Kesler]{Robert Kesler}
\address{ School of Mathematics, Georgia Institute of Technology, Atlanta GA 30332, USA}
\email{robertmkesler@gmail.com}

\author[M. T. Lacey] {Michael T. Lacey}   %  can use \and  

\address{ School of Mathematics, Georgia Institute of Technology, Atlanta GA 30332, USA}
\email {lacey@math.gatech.edu}
\thanks{Research supported in part by grant  from the US National Science Foundation, DMS-1600693 and the 
Australian Research Council ARC DP160100153.}

 \author[D. Mena]{Dar\'io Mena Arias} 
\address{Universidad de Costa Rica}
\email {DARIO.MENAARIAS@ucr.ac.cr}

% 42B20 Singular and oscillatory integrals (Calder\'on-Zygmund, etc.) 
% 42B25 Maximal functions, Littlewood-Paley theory
% 42B35 Function spaces arising in harmonic analysis 

% 46B09 Probabilistic methods in Banach space theory
% 46E40 Spaces of vector- and operator-valued functions
% 47A60 Functional calculus
% 47F05 Partial differential operators
% 60G46 Martingales and classical analysis

%%%%%%%%%%%%%%%%%%%%%%%%%%%%%  Begin Document
\begin{document}
\begin{abstract}
We  prove new  $\ell ^{p} (\mathbb Z ^{d})$ bounds for discrete spherical averages in 
dimensions $ d \geq 5$.  
We focus on the case of lacunary radii, first for general lacunary radii, and then for certain kinds of highly composite choices of radii.  
In particular, if $ A _{\lambda } f $ is the spherical average of $ f$ over the discrete sphere of radius $ \lambda $, we have 
\begin{equation*}
\bigl\lVert  \sup _{k}  \lvert  A _{\lambda _k} f \rvert \bigr\rVert _{\ell ^{p} (\mathbb Z ^{d})} 
\lesssim \lVert f\rVert _{\ell ^{p} (\mathbb Z ^{d})}, \qquad   \tfrac{d-2} {d-3} < p \leq \tfrac{d} {d-2},\ d\geq 5,  
\end{equation*}
for any lacunary sets of integers $ \{\lambda _k ^2 \}$.  
We follow a style of argument from our prior paper, addressing the full supremum. The relevant maximal operator is decomposed into several parts; each part requires only one endpoint estimate.  
\end{abstract}

	\maketitle  
\tableofcontents 
%%%%%%%%%%%%%%%%%%%%%%%%%%%%%% SECTION  SECTION SECTION
%%%%%%%%%%%%%%%%%%%%%%%%%%%%%% SECTION  SECTION SECTION 
\section{Introduction} %\label{s:}  

We prove $ \ell ^{p}$ bounds for discrete spherical maximal operators, concentrating on variants of the lacunary versions of these operators.  
They have a surprising intricacy.  
 For $ \lambda^2  \in \mathbb N $, let $  s_{\lambda } $ be the cardinality of 
the number of $ n\in \mathbb Z ^{d}$ such that $ \lvert  n\rvert ^2   = \lambda  ^2 $.   Define the spherical 
average of a function $ f $ on $ \mathbb Z ^{d}$ to be 
\begin{equation*}
A _{\lambda } f (x) =  s _{\lambda } ^{-1} \sum_{n \in \mathbb Z ^{d } \;:\; \lvert  n\rvert^2 = \lambda  } 
f (x-n)
\end{equation*}
We will always work in dimension $ d \geq 5$, so that for any choice of \( \lambda ^{2} \in \mathbb N  \), 
one has $ s _{\lambda } \simeq \lambda ^{d-2}$.
Define the maximal function $ A _{\ast } f = \sup _{\lambda } A _{\lambda } f $, 
where $ f $ is non-negative and the supremum is over all $ \lambda $ for which the operator is defined. 
This operator was introduced by Magyar \cite{MR1617657}, and the $ \ell ^{p}$ bounds were proved by Magyar, Stein and Wainger \cite{MSW}.   Namely, this is a bounded operator on $ \ell ^{p}$ for 
$ p > \frac{d} {d-2}$.  
 
We address the discrete lacunary spherical maximal function.  
We say that a set of integers \( \{\lambda _k ^{2} \;:\; k\geq 1\} \) is \emph{lacunary} if \( \lambda _{k+1} ^{2}\geq 2 \lambda _k ^{2}\) for all \( k\in \mathbb N \).   
Let \(  A _{ \textup{lac} } = \sup _{k\in \mathbb Z } A _{ \lambda _k} f\).  
We will see that the choice of the \( \lambda _{k} \) have a strong impact on the results.

%%%%%%%%%%%%%%%%%%%%%%%%%%%%%%
\begin{theorem}   \label{t:lac}  For \( d\geq 5 \), let   \( \{\lambda _k  ^2 \}  \) be any lacunary sequence of integers.  
The maximal operator \(  A _{ \textup{lac} }  \) maps $ \ell ^{p}(\mathbb Z ^{d}) \to \ell ^{p}(\mathbb Z ^{d})$ for $ p > \frac{d-2} {d-3}$. 
\end{theorem}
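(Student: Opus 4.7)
My plan is to follow the Magyar--Stein--Wainger (MSW) circle-method analysis of the multiplier of $A_{\lambda_k}$, and then to exploit the freedom of working with a lacunary parameter set in the spirit indicated in the abstract. Let $m_{\lambda_k}$ denote the Fourier multiplier of $A_{\lambda_k}$ on $\mathbb{T}^{d}$. I would decompose it dyadically according to the size of the denominator in the Hardy--Littlewood major arcs, writing
$$
m_{\lambda_k}(\xi)\;=\;\sum_{s\ge 0} m^{s}_{\lambda_k}(\xi),
$$
where $m^{s}_{\lambda_k}$ collects the contributions of denominators $q\sim 2^{s}$, cut off by a bump of width $\sim 2^{-2s}/\lambda_k^{2}$ around each Farey point $a/q$. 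Writing $A^{s}_{\textup{lac}} f := \sup_k |T_{m^{s}_{\lambda_k}} f|$, the task reduces to proving that each $A^{s}_{\textup{lac}}$ is bounded on $\ell^{p}(\mathbb{Z}^{d})$, with a bound that is geometrically summable in $s$, for every $p$ in the claimed range.

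For each $s$ one needs only \emph{one} endpoint estimate, and these come in two flavours. For an $\ell^{2}$ endpoint I would combine the square-root-cancellation bound for the Gauss sums attached to $|n|^{2}$ with the decay $|\widehat{d\sigma}(\xi)|\lesssim (1+|\xi|)^{-(d-1)/2}$ of the continuous spherical measure, producing $\|A^{s}_{\textup{lac}}\|_{\ell^{2}\to\ell^{2}}\lesssim 2^{-s\gamma}$ for some $\gamma=\gamma(d)>0$; the supremum over the lacunary index $k$ is replaced by a square function at a bounded cost by a standard Rademacher--Menshov argument, and the lacunarity of $\{\lambda_k^{2}\}$ is essential here. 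For an endpoint close to $p=1$, I would replace $m^{s}_{\lambda_k}$ by its principal circle-method approximation --- a Gauss sum in $a/q$ times $\widehat{d\sigma}\bigl(\lambda_k(\xi-a/q)\bigr)$ --- and transfer to $\mathbb{R}^{d}$, where the continuous lacunary spherical maximal function is bounded on $L^{p}$ for every $p>1$ by Calder\'on's theorem. This yields an $\ell^{p_{0}}$ bound for any $p_{0}>1$, with at most a polynomial loss $2^{sC(p_{0})}$ coming from the number of Farey fractions with denominator $\sim 2^{s}$.

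Interpolating between the $\ell^{2}$ gain at one end and the $\ell^{p_{0}}$ loss at the other, and summing in $s$, produces $\ell^{p}$ control for all $p$ in an interval descending down to $(d-2)/(d-3)$; the critical exponent emerges as the ratio of the two polynomial rates at the endpoints. The hard step I anticipate is matching those two rates sharply enough to produce exactly the threshold $(d-2)/(d-3)$: this requires the Gauss-sum gain to be of order $q^{-d/2}$, and the pointwise error between the true multiplier $m^{s}_{\lambda_k}$ and its principal approximation to be controlled with the same gain, so that the reduction to the continuous $L^{p}$ theory does not leak into an unacceptable loss. It is precisely the lacunarity of $\{\lambda_k^{2}\}$ that lets the square-function step succeed without a $\log$-loss, and is what permits breaking the MSW barrier $p>d/(d-2)$ down to $p>(d-2)/(d-3)$.
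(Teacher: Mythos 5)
Your proposal correctly identifies the overall architecture --- the MSW circle-method decomposition, the combination of a Gauss-sum-driven $\ell^{2}$ gain with an estimate near $\ell^{1}$ obtained by transference to the continuous lacunary maximal function, and the role of lacunarity --- and this is indeed the skeleton of the paper's argument. The paper also phrases the endgame slightly differently (a restricted-weak-type estimate via a single split $A_{\tau}f\leq M_{1}+M_{2}$ with a free parameter $N$ that is then optimized, rather than a dyadic decomposition in $q\sim2^{s}$ followed by summation), but that distinction is essentially cosmetic.

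The genuine gap is in your $\ell^{p_{0}}$ estimate. If you bound the block with denominators $q\sim 2^{s}$ by the triangle inequality over the $\asymp 2^{2s}$ Farey fractions $a/q$ and transfer each one to $\mathbb{R}^{d}$, the loss is $\asymp 2^{2s}$ as $p_{0}\to1$; the Gauss-sum factor $q^{-d/2}$ is a \emph{pointwise} multiplier bound and is only directly usable at $\ell^{2}$, not at $\ell^{1}$. Interpolating a $2^{2s}$ loss near $\ell^{1}$ against the MSW $\ell^{2}$ gain $2^{-s(d-4)/2}$ from \eqref{e:MSWfactor} reproduces exactly the MSW exponent $p>d/(d-2)$ and does not break the barrier. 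To reach $(d-2)/(d-3)$ one must improve that $\ell^{1}$ loss from $N^{2}$ to $N^{1+\epsilon}$, and this is the crucial point your proposal does not supply. The paper accomplishes it by computing the kernel of the summed major-arc term in closed form as a product
\[
M_{1,2,\lambda}(n)=K_{\lambda}(n)\,\mathsf C_{N}\bigl(\lambda^{2}-|n|^{2}\bigr),\qquad
\mathsf C_{N}(m)=\sum_{q\le N}\mathsf c_{q}(m),
\]
so the arithmetic part is a partial sum of Ramanujan sums. Lemma~\ref{l:Ram} gives the moment bound
\[
\Bigl[\tfrac1M\sum_{n\le M}\Bigl(\sum_{q\le Q}|\mathsf c_{q}(n)|\Bigr)^{j}\Bigr]^{1/j}\lesssim Q^{1+\epsilon},
\]
which is the square-root-cancellation-beating input (trivial bound $Q^{2}$) your interpolation secretly needs. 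After H\"older with a large even $j$ separates the geometric factor $K_{\lambda}$ (handled by comparison with the continuous lacunary maximal operator, Proposition~\ref{p:K}) from the arithmetic factor $\mathsf C_{N}(\lambda^{2}-|n|^{2})$ (handled by the moment bound), the $\ell^{1+\epsilon}$ loss becomes $N^{1+\epsilon}$, and that is precisely what balances against $N^{-(d-4)/2}$ to produce $p>(d-2)/(d-3)$. Without this Ramanujan-sum cancellation your plan, as written, stalls at the MSW index.
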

%%%%%%%%%%%%%%%%%%%%%%%%%%%%%%
 
Our bound $\frac{d-2} {d-3} $ is smaller than the index $ \frac{d} {d-2}$, for which the 
full supremum $  A _{\ast} f$ is bounded \cite{MSW}.  
Kevin Hughes \cite{160904313} proved a version of the result above, for a very particular sequence of radii, and in dimension $ d=4$.  
In contrast to the continuous case, no such inequalities can hold close to $ \ell ^{1}$. 
An example of Zienkiewicz \cite{JZ}
show that there are lacunary radii \( \{ \lambda_k\} \) for which the corresponding maximal operator \( A _{ 
\textup{lac}}  \) is unbounded on $ \ell ^{p}$, for $ 1< p < \frac {d} {d-1}$. 
It is an interesting question  to determine the best $ p = p (d)$ for which  any lacunary 
maximal function $ A _{\textup{lac}} $ would be bounded on $ \ell ^{p} (\mathbb Z ^{d})$.

\bigskip 

The Theorem above concerns classical type examples of radii.
Brian Cook \cite{180803822} has shown that for   highly composite  radii 
$ \lambda _{k} ^2 = 2 ^{k}!$, that the maximal function $   \sup _{k} A _{\lambda _k} f$ is 
bounded on $ \ell ^{p}$, for all $ 1< p < \infty $.   
The Theorem below shows that this continues to hold for e.g.~$ \lambda _k ^2 = [ k ^{\log\log k}]!$.  

%%%%%%%%%%%%%%%%%%%%%%%%%%%%%% THEOREM THEOREM THEOREM
\begin{theorem}\label{t:HC}  For \( d\geq 5 \), let $ \mu _k $ be an increasing  sequence of integers for which 
\begin{equation}\label{e:log}
\lim _{k} \frac{\log \mu _k} {\log k} = \infty . 
\end{equation}
Then, for $ \lambda _k ^2 = \mu _k !$, the maximal function $ \sup _{k} A _{\lambda _k} f$ 
maps $ \ell ^{p} (\mathbb Z ^{d})\to \ell ^{p}(\mathbb Z ^{d})$ for $ 1< p < \infty $.   
\end{theorem}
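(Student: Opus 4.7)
The plan is to adapt the Magyar--Stein--Wainger circle-method decomposition to exploit the extraordinary divisibility of $\lambda_k^2 = \mu_k!$, in the spirit of the proof of Theorem~\ref{t:lac} and of Cook~\cite{180803822}.

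First, I would decompose the multiplier as
\begin{equation*}
\widehat{A_\lambda}(\xi) = \sum_{q \geq 1} \widehat{M_\lambda^q}(\xi) + \widehat{E_\lambda}(\xi),
\end{equation*}
where $M_\lambda^q$ collects Farey fractions with denominator exactly $q$ (with coefficients of the form $e(-a\lambda^2/q)\,G(a/q)$ times a bump around $a/q$), and $E_\lambda$ has $\ell^2 \to \ell^2$ norm decaying polynomially in $\lambda$. The error maximal $\sup_k \lvert E_{\lambda_k} f\rvert$ is controlled on $\ell^2$ by summing the $\ell^2 \to \ell^2$ norms along the lacunary sequence $\{\lambda_k\}$, then interpolated with the trivial $\ell^\infty$ bound to reach every $\ell^p$, $1 < p \leq \infty$.

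Next, for each denominator $q$ I would split the supremum according to whether divisibility is available: let $G_q = \{k : q \leq \mu_k\}$ (the ``good'' set on which $q \mid \mu_k! = \lambda_k^2$) and $B_q = \{k : \mu_k < q\}$. On $G_q$, every Gauss phase $e(-a\lambda_k^2/q)$ equals $1$, so the multiplier factors cleanly: $M_{\lambda_k}^q f = \nu_q \ast \Phi_{\lambda_k}^\sharp \ast f$, where $\nu_q$ is a $k$-independent arithmetic operator with Fourier support at rationals with denominator $q$, and $\Phi_\lambda^\sharp$ is a smooth continuous spherical approximation at radius $\lambda$. Since the arithmetic kernel does not depend on $k$,
\begin{equation*}
\sup_{k \in G_q} \lvert M_{\lambda_k}^q f \rvert \leq \lvert \nu_q \rvert \ast \sup_k \lvert \Phi_{\lambda_k}^\sharp \ast f \rvert,
\end{equation*}
and the right-hand side is bounded on every $\ell^p$, $1 < p < \infty$, because the continuous lacunary spherical maximal (transferred from $\mathbb{R}^d$) is bounded on every such $\ell^p$. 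Standard Kloosterman--Gauss estimates together with interpolation yield $\lVert \nu_q \rVert_{\ell^p \to \ell^p} \lesssim q^{-(d-2)/2 + \epsilon}$, so the sum over $q$ converges for $d \geq 5$.

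On $B_q$, divisibility fails, so I would estimate crudely by
\begin{equation*}
\sup_{k \in B_q} \lvert M_{\lambda_k}^q f \rvert \leq \sum_{k \in B_q} \lvert M_{\lambda_k}^q f \rvert.
\end{equation*}
The hypothesis \eqref{e:log} forces $\mu_k \geq k^C$ for every fixed $C$ once $k$ is large, whence $\lvert B_q \rvert$ is smaller than every power of $q$. Combined with the Kloosterman bound $\lVert M_\lambda^q \rVert_{\ell^2 \to \ell^2} \lesssim q^{-(d-2)/2}$, the $\ell^2$ norm of the inner sum is at most $\lvert B_q \rvert \cdot q^{-(d-2)/2}$, which is summable in $q$. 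Interpolation with the (at worst $q^\epsilon$) trivial $\ell^p \to \ell^p$ bound of each $M_\lambda^q$ then yields summable bounds on every $\ell^p$, $1 < p < \infty$.

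The main obstacle is precisely this last step: controlling the ``bad'' tail on $\ell^p$ for $p$ close to $1$. This is where the strength of \eqref{e:log} does its real work. A merely polynomial hypothesis $\mu_k \geq k^C$ would only yield $\lvert B_q \rvert \lesssim q^{1/C}$, and after interpolation one would be forced to keep $p$ bounded away from $1$, matching the limitation of Theorem~\ref{t:lac}. The superpolynomial growth of $\mu_k$ guaranteed by \eqref{e:log} is exactly what is needed to absorb the tail uniformly on every $\ell^p$, $1 < p < \infty$.
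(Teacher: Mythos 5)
Your plan follows the right general outline (circle method, exploit divisibility, split good/bad $k$ per denominator), but it has a genuine gap that cannot be patched within the per-$q$ framework, and a secondary gap in handling the error term.

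The fatal step is the claim that ``Standard Kloosterman--Gauss estimates together with interpolation yield $\lVert \nu_q \rVert_{\ell^p \to \ell^p} \lesssim q^{-(d-2)/2 + \epsilon}$.'' That rate is only the $\ell^2$ bound. Computing the kernel of $\nu_q$ explicitly gives $\nu_q(m) = \psi_q(m)\,\mathsf c_q(\lvert m\rvert^2)$, a Ramanujan-sum kernel, whose $\ell^1$ norm is $\gtrsim q^{-\epsilon'}$ and does \emph{not} decay in $q$. Interpolating $\lVert\nu_q\rVert_{\ell^1\to\ell^1}\lesssim q^{\epsilon}$ against $\lVert\nu_q\rVert_{\ell^2\to\ell^2}\lesssim q^{-(d-2)/2}$ gives $\lVert\nu_q\rVert_{\ell^p\to\ell^p}\lesssim q^{\epsilon - (d-2)/p'}$, which is summable in $q$ precisely when $p' < d-2$, i.e.\ $p > \tfrac{d-2}{d-3}$. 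So your per-denominator decomposition reproves Theorem~\ref{t:lac} but cannot cross the $\tfrac{d-2}{d-3}$ barrier, which is the whole content of Theorem~\ref{t:HC}. The point the paper exploits (following Cook) is that one must treat the entire arithmetic block at the single modulus $Q=N!$ \emph{as one operator} $U$: its kernel collapses to $Q\,\psi_Q(m)\,\delta_{\{\lvert m\rvert^2\equiv 0 \bmod Q\}}$ because the multiplier sums over \emph{all} residues $a\bmod Q$, not just the reduced ones, and so the various $q$-pieces cancel. That bundled operator has $\ell^1$ norm $O(1)$ uniformly in $Q$, which is strictly stronger than what the sum $\sum_q\lVert\nu_q\rVert_{\ell^1}$ can give. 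Without that gathering step, the factorization idea on the good set does not close.

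Secondly, you state that the error maximal $\sup_k\lvert E_{\lambda_k}f\rvert$, bounded on $\ell^2$ with geometric decay, can be ``interpolated with the trivial $\ell^\infty$ bound to reach every $\ell^p$, $1<p\leq\infty$.'' Interpolating $\ell^2$ with $\ell^\infty$ only covers $p\in[2,\infty]$; the range $1<p<2$ is out of reach this way, since $E_\lambda$ has no useful bound near $\ell^1$. The paper avoids this by putting \emph{all} such low-regularity pieces into $M_2$ and proving only an $\ell^2$ estimate for $M_2$, with the $p<2$ information coming entirely from $M_1$ and the optimization over $N$ in a restricted-type framework. Your description of $B_q$ and the role of \eqref{e:log} is on target, but that is the easier half of the argument; the missing ingredient is the uniform $\ell^1$ control of the bundled arithmetic operator.
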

%%%%%%%%%%%%%%%%%%%%%%%%%%%%%% THEOREM THEOREM THEOREM

\bigskip 
Our  method of proof is inspired by a method of Bourgain \cite{MR812567}, and its application to the 
discrete setting by Ionescu \cite{I}.  We used it for the full discrete spherical maximal operator of Magyar, Stein and 
Wainger in \cite{MR4041278}.   In particular, we proved an endpoint sparse bound in that setting. 
 
These arguments are relatively easy.  
The maximal operators are treated as maximal multipliers.  
Each component of the decomposition of the multiplier needs only one 
estimate, either an $ \ell ^2 $ estimate, or  an $ \ell ^{1}$ estimate.  As such, the argument can be used to simplify existing results, and simplify the search for new ones.   
We illustrate these ideas in a simple context in \S \ref{s:R}.
The discrete lacunary theorem is proved in \S\ref{s:lac}, and the highly composite case in \S~\ref{s:HC}. 

%%%%%%%%%%%%%%%%%%%%%%%%%%%%%% SECTION  SECTION SECTION
%%%%%%%%%%%%%%%%%%%%%%%%%%%%%% SECTION  SECTION SECTION 
\section{The Continuous Lacunary Case} \label{s:R}

To illustrate the proof technique, we prove the classical results on the lacunary spherical averages on Euclidean spaces. 
Let $ \mathbb S ^{d-1}$ be the unit sphere in $ \mathbb R ^{d}$, and let $ \sigma $ be the rotationally invariant probability measure on $ \mathbb S ^{d-1}$.  Let 
\begin{equation*}
\mathcal A _{\lambda } f (x) = \int _{\mathbb S } f (x-y) \; d \sigma (y). 
\end{equation*}
The key property of these averages that we will rely upon is the stationary decay estimate 
\begin{equation}\label{e:STA}
\lvert  \widetilde {d \sigma } (\xi )\rvert  \lesssim \lvert  \xi \rvert  ^{- \frac{d-1}2}, 
\end{equation}
where the tilde represents the Fourier transform.  
We begin with this proposition. 

%%%%%%%%%%%%%%%%%%%%%%%%%%%%%% PROPOSITION PROPOSITION PROPOSITION
\begin{proposition}\label{p:Littman} For $ f = \mathbf 1_{F}$ and $ g= \mathbf 1_{G}$ supported on the unit cube in $ \mathbb R ^{d}$, 
there holds 
\begin{equation*}
\langle \mathcal A _1   \mathbf 1_{F} , \mathbf 1_{G}\rangle \lesssim (\lvert  F\rvert \cdot \lvert  G\rvert  ) ^{\frac{d} {d+1}}, 
\qquad  F,G \subset [0,1] ^{d}. 
\end{equation*}

\end{proposition}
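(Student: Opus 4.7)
The plan is to decompose the multiplier $\widetilde{d\sigma}$ into dyadic frequency pieces, bound each piece by one of two complementary estimates, and then sum. Fix a Littlewood--Paley bump $\phi$ supported in $\{1/2 \le \lvert\xi\rvert \le 2\}$ and a low-frequency partner $\phi_0$, and set $m_j(\xi) = \widetilde{d\sigma}(\xi)\,\phi(\xi/2^j)$ for $j\ge 1$, with associated convolution operator $T_j$ and kernel $K_j$. The low-frequency piece $T_0$ is convolution with the Schwartz function $d\sigma \ast \check{\phi}_0$, so $\lvert\langle T_0 \mathbf 1_F, \mathbf 1_G\rangle\rvert \lesssim \lvert F\rvert\lvert G\rvert$; since $\lvert F\rvert,\lvert G\rvert \le 1$ this is already dominated by $(\lvert F\rvert\lvert G\rvert)^{d/(d+1)}$.

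For each $j\ge 1$ I would record two bilinear estimates. First, the stationary phase decay \eqref{e:STA} gives $\lVert m_j\rVert_\infty \lesssim 2^{-j(d-1)/2}$, so by Plancherel and Cauchy--Schwarz,
\[
\bigl\lvert \langle T_j \mathbf 1_F, \mathbf 1_G \rangle \bigr\rvert \lesssim 2^{-j(d-1)/2}\,(\lvert F\rvert\lvert G\rvert)^{1/2}.
\]
Second, $K_j(x) = \int 2^{jd}\,\check{\phi}(2^j(x-y))\,d\sigma(y)$ is a mollification of the surface measure at spatial scale $2^{-j}$: the mollifier has height $\sim 2^{jd}$, and the unit sphere meets any ball of radius $2^{-j}$ in $(d-1)$-measure $\lesssim 2^{-j(d-1)}$, yielding $\lVert K_j\rVert_\infty \lesssim 2^j$ and hence
\[
\bigl\lvert \langle T_j \mathbf 1_F, \mathbf 1_G \rangle \bigr\rvert \lesssim 2^j\,\lvert F\rvert\lvert G\rvert.
\]

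Summing now reduces to balancing. The two bounds coincide at $2^{j_0} \sim (\lvert F\rvert\lvert G\rvert)^{-1/(d+1)}$. For $j\le j_0$ the physical-side bound is smaller, and its geometric sum in $j$ is $\lesssim 2^{j_0}\lvert F\rvert\lvert G\rvert = (\lvert F\rvert\lvert G\rvert)^{d/(d+1)}$; for $j>j_0$ the Fourier-side bound is smaller, and its geometric tail is $\lesssim 2^{-j_0(d-1)/2}(\lvert F\rvert\lvert G\rvert)^{1/2}$, which also equals $(\lvert F\rvert\lvert G\rvert)^{d/(d+1)}$. This scheme fits the paper's template: each dyadic piece is handled by exactly one endpoint, either an $L^2$ bound on the multiplier or an $L^1$-type bound on the kernel. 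The only nontrivial step is the kernel estimate $\lVert K_j\rVert_\infty \lesssim 2^j$, which I would verify by splitting the integral above into dyadic annuli around $x$ and using the Schwartz decay of $\check\phi$ together with the standard surface-area bound; curvature of the sphere is irrelevant for this step and enters only through \eqref{e:STA}.
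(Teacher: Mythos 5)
Your argument is correct and is essentially the paper's proof in dyadic form: you balance the stationary-phase decay \eqref{e:STA} on the Fourier side against an $L^1 \to L^\infty$ kernel bound on the physical side, with the crossover at $2^{j_0}\sim(\lvert F\rvert\lvert G\rvert)^{-1/(d+1)}$. The paper achieves the same balance with a single frequency cut at scale $N$ (setting $M_1=\varphi_{1/N}\ast\mathcal A_1 f$, so that $\lVert M_1\rVert_\infty\lesssim N\lvert F\rvert$ and $\lVert M_2\rVert_2\lesssim N^{-(d-1)/2}\lvert F\rvert^{1/2}$) and then optimizes over $N$; your Littlewood--Paley decomposition and summation over $j$ is an interchangeable presentation of that same cut-and-balance, and your verification of $\lVert K_j\rVert_\infty\lesssim 2^j$ is sound.
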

%%%%%%%%%%%%%%%%%%%%%%%%%%%%%% PROPOSITION PROPOSITION PROPOSITION

The inequality above is just a little weaker than the classical result of Littman \cite{MR0358443} and Strichartz \cite{MR0256219}, 
that locally $ \mathcal A_1$ maps $ L ^{ \frac{d+1}d}$ into $ L ^{d+1}$.  That inequality requires a sophisticated analytic interpolation argument. 

%%%%%%%%%%%%%%%%%%%%%%%%%%%%%% PROOF PROOF PROOF
\begin{proof}
The proof proceeds by this supplementary procedure.  For integers $ N$, we estimate $ \mathcal A_1 f \leq M_1 + M_2$, where 
\begin{equation}\label{e:L1}
\lVert M_1\rVert_ \infty  \leq N \lvert  F\rvert, \qquad \lVert M_2\rVert _2 \leq N ^{- \frac{d-1}2}\lvert  F\rvert ^{1/2}.  
\end{equation}
With this established, we have 
\begin{equation*}
\langle \mathcal A _1   \mathbf 1_{F} , \mathbf 1_{G}\rangle 
\leq  N \lvert  F\rvert \cdot \lvert  G\rvert +    N ^{- \frac{d-1}2}\bigl[ \lvert  F\rvert \cdot \lvert  G\rvert \bigr] ^{1/2}
\end{equation*}
Optimizing the right hand side over $ N$ proves the proposition. We omit the details.  

It remains to construct $ M_1$ and $ M_2$. 
 Let $\varphi $ be a non-negative  Schwartz function, with integral one, and compact spatial  support. 
Likewise, set $ \varphi _ t (x) = t ^{-d} \varphi (x/t)$.  Then, $ M_1 = \varphi _{1/N} \ast \mathcal A_1 f $. 
This is convolution of $ f$ against a uniform probability measure supported on an annulus around the unit sphere of width $ 1/N$. 
So it is clear that $ M_1$ satisfies the first estimate in \eqref{e:L1}, and the second estimate  \eqref{e:L1} for $ M_2$ follows from \eqref{e:STA}. 
(This proof is known to experts in the subject.) 
\end{proof}
%%%%%%%%%%%%%%%%%%%%%%%%%%%%%% PROOF PROOF PROOF

The next argument addresses the lacunary spherical maximal function.

%%%%%%%%%%%%%%%%%%%%%%%%%%%%%% THEOREM THEOREM THEOREM
\begin{theorem}\label{t:R}  Let $ \{\lambda _k\} \subset (0, \infty )$ be a lacunary sequence of reals. 
Then, there holds 
\begin{equation} \label{e:RL}
\lVert \sup _{k} \mathcal A _{\lambda _k} f \rVert _{p} \lesssim \lVert f\rVert_p, \qquad 1< p < \infty . 
\end{equation}
\end{theorem}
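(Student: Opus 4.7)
The plan is to split each $\mathcal{A}_{\lambda_k}$ into a smooth part and an oscillatory remainder, just as in the proof of Proposition~\ref{p:Littman}, and to handle each piece with a single endpoint estimate in the spirit of the paper. Fix a radial Schwartz function $\varphi$ with $\widehat{\varphi}(0)=1$, set $\varphi_t(x)=t^{-d}\varphi(x/t)$, and write $\mathcal{A}_{\lambda_k}f=P_kf+R_kf$ with $P_kf=\varphi_{\lambda_k}\ast f$ and $R_k=\mathcal{A}_{\lambda_k}-P_k$. The smooth piece is majorised pointwise by the Hardy--Littlewood maximal function $M$, namely $\sup_k|P_kf|\lesssim Mf$, which supplies the $L^p$ bound for $1<p\le\infty$ from that single ingredient.

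For the rough piece I would use the crude majorant $\sup_k|R_kf|\le\bigl(\sum_k|R_kf|^{2}\bigr)^{1/2}$. The multiplier $\widehat{R_k}(\xi)=\widehat{d\sigma}(\lambda_k\xi)-\widehat{\varphi}(\lambda_k\xi)$ vanishes at the origin (both factors equal $1$ there) and decays like $|\lambda_k\xi|^{-(d-1)/2}$ at infinity by \eqref{e:STA}. At $L^2$, Plancherel reduces the square function to controlling $\sum_k|\widehat{R_k}(\xi)|^{2}$ uniformly in $\xi$. Lacunarity of $\{\lambda_k\}$ converts that sum into a pair of geometric series---one over the indices with $\lambda_k|\xi|<1$ controlled by the vanishing at zero, one over $\lambda_k|\xi|>1$ controlled by the decay at infinity---and it is therefore bounded by an absolute constant. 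This is the single $\ell^{2}$-type estimate for the rough part.

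To pass from $p=2$ to the full range $1<p<\infty$, I would dyadically decompose $R_k=\sum_{j\ge 0}T_{j,k}$ into Littlewood--Paley blocks localised at frequencies $|\xi|\sim 2^{j}/\lambda_k$. For each fixed $j$ the argument above sharpens to $\|\sup_k|T_{j,k}f|\|_{2}\lesssim 2^{-j(d-1)/2}\|f\|_{2}$, while the kernel of $T_{j,k}$ is $L^{1}$-normalised so that $|T_{j,k}f|\lesssim 2^{Cj}Mf$ pointwise, giving a uniform $L^{p}$ bound for $1<p\le\infty$. Interpolating between these two estimates and summing the resulting geometric series in $j$ produces the $L^{p}$ bound for every $p\in(1,\infty)$.

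The step I expect to be most delicate is the extension down to $p$ close to $1$: the naive interpolation above loses too much of the $j$-decay near the endpoint. The cleanest remedy is to view $\{R_k\}_k$ as a vector-valued Calder\'on--Zygmund operator, at which point verifying H\"ormander's condition for the $\ell^{2}$-valued kernel lets the $L^{2}$ bound upgrade automatically to all $1<p<\infty$ by standard CZ theory, bypassing the interpolation headache and keeping the proof in line with the paper's one-estimate-per-piece philosophy.
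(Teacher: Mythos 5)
Your proposal is correct in outline, but it follows a genuinely different route from the paper. What you sketch is essentially the classical Littlewood--Paley argument of Duoandikoetxea--Rubio de Francia: smooth at scale $\lambda_k$, bound the smooth piece by the Hardy--Littlewood maximal function, control the remainder via a square function at $L^2$, and then extend below $p=2$ by dyadic frequency decomposition plus interpolation with a weak $(1,1)$ bound on each block. The paper deliberately avoids that machinery. It proves only a restricted weak type estimate, and it introduces an auxiliary parameter $N$: the smooth piece $M_1$ lives at the \emph{fine} scale $\lambda_k/N$ (not $\lambda_k$), pays a factor $\log N$ via an $L^\infty \to BMO$ duality estimate for a linearization, while the rough piece $M_2$ gains the decay $N^{-(d-1)/2}$ at $L^2$; optimizing over $N$ closes the argument. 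The payoff of the paper's approach is that it needs exactly one $L^1$-type estimate and one $L^2$-type estimate per piece, with no interpolation in $j$ and no Calder\'on--Zygmund decomposition of a singular kernel --- and this is precisely the structure that survives the passage to the discrete setting in the rest of the paper, which your argument would not transfer to cleanly.

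One point in your sketch needs tightening. In the last paragraph you propose verifying H\"ormander's condition for the $\ell^2$-valued kernel of $f\mapsto (R_k f)_k$ directly. But $K_k = d\sigma_{\lambda_k} - \varphi_{\lambda_k}$ is not a locally integrable function away from the origin: $d\sigma_{\lambda_k}$ is a singular surface measure, so the expression $\int_{|x|>2|y|}\|K(x-y)-K(x)\|_{\ell^2}\,dx$ does not make literal sense. The standard repair is the one already implicit in your third paragraph: apply vector-valued CZ theory to each Littlewood--Paley block $\{T_{j,k}\}_k$, whose kernels are genuinely smooth, to obtain a weak $(1,1)$ bound growing only polynomially in $j$ (not like $2^{j}$, which is what the crude pointwise bound $|T_{j,k}f|\lesssim 2^{j}Mf$ gives). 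Interpolating that polynomial growth against the $L^2$ decay $2^{-j(d-1)/2}$ sums in $j$ for every $p\in(1,2)$. The crude $L^p$ bound $2^{Cj}$ from the maximal function alone, interpolated with $L^2$, only reaches $p>\tfrac{d+1}{d}$, which is exactly the ``loses too much near the endpoint'' issue you anticipated. So your diagnosis of the weak spot is right, but the stated fix needs to be applied blockwise rather than to the full singular kernel.
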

%%%%%%%%%%%%%%%%%%%%%%%%%%%%%% THEOREM THEOREM THEOREM

%%%%%%%%%%%%%%%%%%%%%%%%%%%%%% PROOF PROOF PROOF
\begin{proof}
The inequality in \eqref{e:RL} is elementary for $ p=2$. And we take it for granted, while noting that a certain quantification of this familiar argument will appear below.  
It remains to prove the inequality for $ 1< p < 2$. 
We aim to prove the restricted weak type estimate 
\begin{equation}\label{e:RWTE}
\bigl\langle \sup _{k} \mathcal A _{\lambda _k} f, g  \bigr\rangle \lesssim \lvert  F\rvert ^{1/p} \lvert  G\rvert ^{1/p'},   
\end{equation}
where $ f = \mathbf 1_{F}$ and $ g= \mathbf 1_{G}$.  Note that the $ L ^{2} $ inequality implies this for $ \lvert  G\rvert \leq \lvert  F\rvert $. So we assume 
the converse below.  

We set up a supplementary objective.  For sets $ F\subset \mathbb R ^{d}$ of finite measure, choices of $1< p < 2 $, and all integers $ N$, 
we can write $  \sup _{k} \mathcal A _{\lambda _k} f \leq M_1 + M_2$, 
\begin{align}\label{e:R1}
\lVert M_1 \rVert  & \lesssim  (\log N)\lvert  F\rvert ^{1/ p} , 
\\ \label{e:RRR2}
\lVert M_2 \rVert _{2} & \lesssim N ^{ - \frac{d-1}2 } \lvert  F\rvert ^{1/2}.  
\end{align}

We have 
\begin{align*}
\bigl\langle \sup _{k} \mathcal A _{\lambda _k} f, g  \bigr\rangle  & \lesssim  
\langle M_1 , \mathbf 1_{G} \rangle + \langle M_2 , \mathbf 1_{G} \rangle 
\\
& \lesssim (\log N) \lvert  F\rvert ^{1/ p} \lvert  G\rvert ^{ (p-1)/p } 
+ N ^{ - \frac{d-1}2 } \lvert  F\rvert ^{1/2} \lvert  G\rvert ^{1/2}  .
\end{align*}
Recalling that $ \lvert  G\rvert > \lvert  F\rvert  $, we can optimize this over $ N$, and then let $ p $ tend  to one to complete the proof of \eqref{e:RWTE}. We omit the details, except to say that the restriction to indicators is very useful at this point.  

\smallskip 

We turn to the construction of $ M_1 $ and $ M_2$.  Using the same notation is in the proof of Proposition~\ref{p:Littman}, set 
\begin{equation*}
M_1 = \sup _{k}  \varphi _{ \lambda _k /N} \ast \mathcal A _{\lambda _k} f. 
\end{equation*}
This defines $ M_2$ implicitly.  The   stationary decay estimate \eqref{e:STA} and a standard square function argument  
combine in a familiar way to prove \eqref{e:RRR2}. 
\begin{align*}
\lVert M_2 \rVert _ 2 ^2  & \lesssim  \sum_{k} \lVert  \varphi _{ \lambda _k /N} \ast \mathcal A _{\lambda _k} f - \mathcal A _{\lambda _k } f   \rVert_2 ^2 
\\
& \lesssim \lVert f\rVert_2 ^2 \sup _{\xi } \sum_{k} \lvert  \widetilde  \varphi (\lambda _k  \xi ) -1   \rvert ^2 \cdot \lvert  \widetilde {d \sigma } (\xi ) \rvert ^2 
\\
& \lesssim N ^{1-d} \lvert  F\rvert .   
\end{align*}
Note that this argument is a certain quantification of the standard square function proof of the boundedness of the lacunary spherical maximal operator on $ L ^2 $.  

\smallskip 
For \eqref{e:R1}, namely the control of $ M_1$, we show that the maximal function $ B_N f = \sup _{k} \varphi _{ \lambda _k /N} \ast \mathcal A _{\lambda _k} f$ 
satisfies a strong type $ L ^{p}$ bound smaller than $ \log  N$. 

Now, it is clear that $ B_N$ is a bounded operator on $ L ^2 $.  
One can approach the $ L ^{p}$ bounds for $ 1< p < 2$ directly, using a bit of Calder\'on-Zygmund theory. 
We use duality, however. 
This requires that we linearize the maximal operator $B_N f$, which is done as follows.

For any collection of pairwise disjoint subsets of $ \mathbb R ^{d}$ denoted by 
$ \{  S_k \;:\; k\in \mathbb Z \}$, we can form the linear operator 
\begin{equation*}
T f = \sum_{k} \mathbf 1_{S_k} \varphi _{\lambda _k/N} \ast \mathcal A _{\lambda _k } f . 
\end{equation*}
This is bounded on $ L ^2 $, independently of the selection of the sets $ S_k$.  
We show that $ T ^{\ast} $ maps $ L ^{\infty }$ into $ BMO$ with norm at most $ \log N$.  
By interpolation and duality, we see that \eqref{e:R1} holds. 

 To verify our $ BMO$ claim we need to show  this: 
For $ \phi  \in L ^{\infty }$, 
and cube $ Q $, there is a constant $ \mu $ so that 
\begin{equation}\label{e:LBMO} 
\int _{Q} \lvert  T ^{\ast} \phi   - \mu \rvert ^2  \lesssim   (\log N) ^2 \lVert \phi \rVert_ \infty ^2 \lvert  Q\rvert . 
\end{equation}
Split $ T^\ast $ into three parts, $ T^\ast_0, T^\ast_1, T^\ast_2$, where 
\begin{align*}
T^\ast_0 \phi  &= \sum_{k \;:\; \lambda _k  <  \ell  Q} 
\varphi _{\lambda _k/N} \ast \mathcal A _{\lambda _k } ( \mathbf 1_{S_k} \phi )  , 
\\
T^\ast_2 \phi  &= \sum_{k \;:\;     \ell  Q  < \lambda _k/N}   
\varphi _{\lambda _k/N} \ast \mathcal A _{\lambda _k } ( \mathbf 1_{S_k} \phi )  , 
\end{align*}
This defines $ T^\ast_1$ implicitly.  
Define $ \mu = T^\ast_2 \phi  (x_Q)$, where $ x_Q$ is the center of $ Q$.  
Straight forward kernel estimates and lacunarity of $ \lambda _k$ show that 
\begin{equation*}
\sup _{x\in Q}\lvert  T^\ast _{2} \phi  (x) - \mu  \rvert \lesssim \lVert \phi \rVert_ \infty .  
\end{equation*}
For $ T^\ast_0$, we have the $ L ^2 $ bound for $ T^\ast$ which implies 
\begin{align*}
\int _{Q} \lvert  T^\ast _0 \phi   \rvert ^2 \; dx & = \int _{Q} \lvert  T^\ast _0( \phi  \mathbf 1_{2Q})  \rvert ^2 \; dx 
\lesssim \lVert \phi \rVert_ \infty ^2 \lvert  Q\rvert . 
\end{align*}
That  leaves $ T^\ast_1$, but it is the sum of at most $ \log N$ functions each bounded by $ \lVert \phi \rVert _{\infty }$.  Thus, \eqref{e:LBMO} follows.

\end{proof}
%%%%%%%%%%%%%%%%%%%%%%%%%%%%%% PROOF PROOF PROOF

We make these additional remarks on this method of proof used in this paper. 
%%  ENUMERATE
\begin{enumerate}

\item The fine analysis of the $ L ^{1}$ endpoint of the continuous lacunary spherical maximal function is still an open question \cites{MR1955209,2017arXiv170301508C}. 
It would be interesting to know if this technique can simplify those arguments. 

\item  For the local maximal operator $ \sup _{1\leq \lambda \leq 2} \mathcal A _{\lambda } f$, considered by Schlag \cite{MR1388870}, 
there is an elegant proof of the $ L ^{p}$ improving estimates along these lines of this section, given by Sanghyuk Lee \cite{MR1949873}.  
The latter argument can be modified in an interesting way to prove sparse variants for the Stein maximal operator, giving certain 
improvements over the sparse bounds of \cite{MR4041115}. 

\item Likewise, the $ \ell ^{1}$ endpoint cases are of interest in the discrete case. 
Can one show that for the maximal functions $ M$ in Theorem~\ref{t:HC}, that they map $ \ell \log \ell $ into weak $ \ell ^{1}$? 

\item The two proofs can be combined to prove a restricted weak type sparse bound for the lacunary spherical maximal function at the 
point $ (\frac{d+1}d, \frac{d+1}d)$. This is an interesting extension of the sparse bounds proved in \cite{MR4041115}. We leave the details to the reader.  

\item The main results of \cite{MR4041278} prove sparse bounds for the Magyar Stein Wainger discrete spherical maximal function.  
Those inequalities can be combined with Theorem~\ref{t:lac} and Theorem~\ref{t:HC} to give novel sparse bounds for 
these operators. These in turn imply novel weighted inequalities, which we leave to the interested reader.
However, in the special case of Theorem~\ref{t:lac}, one can prove additional sparse bounds. We do not purse these details here. 
\end{enumerate}
%% ENUMERATE
 
We thank the referee for encouraging us  to include this section in the paper. 
 
%%%%%%%%%%%%%%%%%%%%%%%%%%%%%% SECTION  SECTION SECTION
%%%%%%%%%%%%%%%%%%%%%%%%%%%%%% SECTION  SECTION SECTION 
\section{General Lacunary Sequences} \label{s:lac}

 The key Lemma is the restricted  type estimate below.

\begin{lemma}\label{l:lacLemma} Let $ \lambda _k ^2 $ be a lacunary set of integers.  
 For a finitely supported function \( f = \mathbf{1}_F \), and function $ \tau \;:\; \mathbb Z ^{d} \to \{  \lambda _k\}$, there holds 
 \begin{align}  \label{e:lacPre}
\lVert  A _{ \tau} f\rVert_ p \lesssim  \lvert  F\rvert ^{1/p}  , \qquad  \tfrac{d-2} {d-3} < p < 2. 
 \end{align}
\end{lemma}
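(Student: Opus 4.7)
The plan is to adapt the two-piece decomposition strategy of Section~\ref{s:R} to the arithmetic setting, with the smooth truncation $\varphi_{\lambda/N} \ast \cdot$ replaced by a truncation of the Hardy--Littlewood/Magyar--Stein--Wainger decomposition of the discrete spherical multiplier. Since $\tau$ already linearizes the supremum, it suffices by duality to prove
\begin{equation*}
\langle A_\tau \mathbf{1}_F , \mathbf{1}_G \rangle \lesssim |F|^{1/p} |G|^{1/p'}
\end{equation*}
for sets $G$ with $|G| \geq |F|$; the opposite regime is handled by the $\ell^2$ bound for the discrete lacunary maximal function, which is standard from Plancherel and lacunary orthogonality.

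I would then fix a parameter $N$ and, via the circle method, write $\widehat{A_{\lambda}}(\xi) = \sum_{s \ge 0} \widehat{L_{\lambda}^s}(\xi) + \widehat{E_{\lambda}}(\xi)$, where $L_\lambda^s$ carries the portion of the multiplier supported in shrinking neighborhoods of reduced fractions $a/q$ with denominator $q \simeq 2^s$. The structural form of $\widehat{L_\lambda^s}$ is a product of a Gauss sum $G(a,q)$ with $|G(a,q)|\lesssim q^{-d/2}$, a smooth cutoff, and the continuous sphere factor $\widetilde{d\sigma}(\lambda(\xi-a/q))$. With $S = \log_2 N$, decompose $A_\tau = M_1 + M_2$ where $M_1 = \sum_{s \leq S} L_\tau^s$ is the low-denominator piece and $M_2 = \sum_{s > S} L_\tau^s + E_\tau$ is the high-denominator/error piece.

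For $M_2$, I would prove an $\ell^2$ bound of the form $\|M_2 \mathbf{1}_F\|_2 \lesssim N^{-\beta} |F|^{1/2}$ by Plancherel, dominating the linearized operator by its square function, then summing the pointwise estimate $\sum_{k} |\widehat{L_{\lambda_k}^s}(\xi)|^2 \lesssim 2^{-s(d-3)}$ that comes from combining the Gauss sum bound $q^{-d/2}$ (with at most $2^{sd}$ fractions of denominator $q\simeq 2^s$) with lacunary orthogonality and the stationary phase estimate \eqref{e:STA}. For $M_1$, each individual $L_\lambda^s$ factors (up to acceptable errors) as a convolution on $\mathbb{Z}^d/q\mathbb{Z}^d$ composed with a continuous sphere average at scale $\lambda$, so by the Magyar transference principle the continuous lacunary result (Theorem~\ref{t:R}) yields $\|L_\tau^s \mathbf{1}_F\|_p \lesssim |F|^{1/p}$ for every $p>1$. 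Summing trivially over $s \leq S$ gives $\|M_1 \mathbf{1}_F\|_p \lesssim (\log N)\, |F|^{1/p}$. Pairing with $\mathbf{1}_G$ produces
\begin{equation*}
\langle A_\tau \mathbf{1}_F , \mathbf{1}_G \rangle \lesssim (\log N)\, |F|^{1/p} |G|^{1/p'} + N^{-\beta}\, |F|^{1/2} |G|^{1/2},
\end{equation*}
and optimizing in $N$ (using $|G| \geq |F|$) then letting $p$ approach its threshold would finish the proof.

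The main obstacle will be pinning down the exponent $\beta$ and verifying that the transference for $M_1$ is uniform in $s$. The range $p > \tfrac{d-2}{d-3}$ is forced by exactly this balance: the Gauss sum decay together with the size $s_\lambda \simeq \lambda^{d-2}$ of the sphere leaves a net decay of order $2^{-s(d-3)/2}$ per lacunary scale in $\ell^2$, and this exponent $(d-3)/2$ is what produces $\tfrac{d-2}{d-3}$ once the two bounds above are balanced via Young's inequality. A secondary technical point is justifying that the low-denominator piece can be handled by scalar transference despite $\tau$ being an arbitrary measurable selection; this should follow, as in \cite{181002240}, by discretizing $\tau$ to a finite set of values and applying the continuous estimate on each residue class separately.
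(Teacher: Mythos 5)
Your proposal reproduces the outer strategy of the paper (decompose $A_\tau = M_1 + M_2$, use an $\ell^2$ gain on $M_2$ and a moderate loss on $M_1$, optimize in $N$), but the treatment of the low-denominator piece $M_1$ has a genuine gap, and fixing it is exactly where the paper's new arithmetic ingredient lives.

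The claim that the Magyar transference principle gives $\lVert L^s_\tau \mathbf 1_F\rVert_p \lesssim \lvert F\rvert^{1/p}$ uniformly in $s$, for all $p>1$, does not hold. Transference lets you import the continuous lacunary bound for the spherical factor $\widetilde{d\sigma_\lambda}(\xi-\ell/q)$, but the arithmetic factor --- the Gauss sums $G(a,\ell,q)$ together with the $\lambda$-dependent phases $e_q(-\lambda^2 a)$ --- does not come for free, and for a general lacunary sequence these phases vary with $k$ and cannot be absorbed (this is precisely the obstruction that makes the general lacunary case harder than Cook's highly composite case, where $e_q(-\lambda_k^2 a)\equiv 1$). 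In $\ell^2$ the Gauss sum decay $q^{-d/2}$ is available (that is \eqref{e:MSWfactor}); but near $\ell^1$ you get no such decay, so your purported uniform-in-$s$, all-$p>1$ bound on $L^s_\tau$ would give more than is true. The internal inconsistency is visible in your own sketch: if $M_1$ really lost only $\log N$ in $\ell^p$ for every $p>1$, the optimization with the $N^{-\beta}$ estimate for $M_2$ would produce bounds for all $p>1$, contradicting the range $p>\tfrac{d-2}{d-3}$ that you correctly identify as the target, and contradicting the Zienkiewicz counterexample cited in the introduction. The paper instead accepts a polynomial loss $\lVert M_1\rVert_{1+\epsilon}\lesssim N\lVert f\rVert_{1+\epsilon}$, which is what balances against $N^{-(d-4)/2}$ to give the threshold $\tfrac{d-2}{d-3}$.

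What is missing is the arithmetic analysis of the low-denominator kernel. The paper does not split by dyadic $q$; it keeps all $q\le N$ together, computes the convolution kernel in closed form as $K_\lambda(n)\cdot \mathsf C_N(\lambda^2-\lvert n\rvert^2)$ in \eqref{e:mhat}, where $\mathsf C_N$ is a truncated sum of Ramanujan sums, and then controls it by H\"older combined with the moment estimate of Lemma~\ref{l:Ram}, $\bigl[\tfrac1M\sum_{n\le M}\bigl(\sum_{q\le Q}\lvert \mathsf c_q(n)\rvert\bigr)^j\bigr]^{1/j}\lesssim Q^{1+\epsilon}$. That moment estimate, together with the geometric fact (Proposition~\ref{p:K}) that the smoothed kernel $K_{\lambda_k}$ gives a bounded lacunary maximal operator, is what produces the $N^{1+\epsilon}$ loss. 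Your proposal has no substitute for this Ramanujan-sum input; without it the $M_1$ estimate has no proof. Your treatment of $M_2$ (high denominators plus error term via \eqref{e:E<}, \eqref{e:Caq}, and stationary phase) is essentially the same as the paper's and is sound.
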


We will use the stopping time $ \tau $ to simplify notation throughout.  
We turn to the proof.   It suffices to show that for all integers $ N$, we can decompose $ A _{\tau } f  \leq M_1 + M_2$ with 
\begin{equation}\label{e:Ms}
\lVert M_1 \rVert_{1+ \epsilon } \lesssim N ^{1+ \epsilon } \lVert f\rVert _{1+ \epsilon } , 
\qquad  
\lVert M_2 \rVert_{2} \lesssim  N ^{- \frac{4-d}2}\lVert f\rVert _{2} . 
\end{equation}
Above, implied constants depend upon $ 0< \epsilon < 1$, but we do not make this explicit here, nor at any point of the paper.  
Optimizing over $ N$ proves   \eqref{e:lacPre}. 

Both $ M _{1}$ and $ M _{2}$ have several parts. The first part of $ M _{1}$ is 
$ M _{1,1} =  \mathbf 1_{\tau \leq N}  A _{\lambda _k }f$.  It trivially satisfies the first half of \eqref{e:Ms}.

Recall the decomposition of $ A _{\lambda } f $ from Magyar, Stein and Wainger \cite{MSW}. 
We have the decomposition below, in which  upper case letters denote a convolution operator, and lower case letters denote the corresponding multiplier.  Let $ e (x) = e ^{2 \pi i x}$ and for integers $ q$, $ e_q (x) = e (x/q)$.   
\begin{align}  \label{e:Afull}
A _{\lambda } f &= C _{\lambda } f + E_{\lambda } f , 
\\ \label{e:Cfull}
C _{\lambda } f &= \sum_{1\leq \lambda \leq q} \sum_{ a \in \mathbb Z _q ^{\times }}  e_q (- \lambda ^2 a)  C ^{a/q} _{\lambda } f ,
\\  \label{e:caq}
c ^{a/q} _{\lambda }(\xi) &= \widehat {C ^{a/q} _{\lambda }}   (\xi ) = \sum_{\ell \in \mathbb Z ^{ d}_q} G(a, \ell, q) \widetilde \psi_q (\xi - \ell /q) 
\widetilde {d \sigma _{\lambda }} (\xi - \ell /q) 
\\  \label{e:Gauss}
G(a, \ell, q) &= q ^{-d}\sum_{n \in \mathbb Z _q ^{d}} e_q ( \lvert  n\rvert ^2 a+n \cdot \ell  ). 
\end{align}
The term $ G(a, \ell, q)$ is a normalized Gauss sum. Above,  $ a $ is in the multiplicative 
group $ \mathbb Z _q ^{\times }$. 
Recall that 
\begin{equation}\label{e:G<}
\lvert  G (a, \ell ,q)\rvert \lesssim q ^{-d/2}, \qquad  \textup{gcd}(a, \ell ,q) =1. 
\end{equation}
In \eqref{e:caq}, the hat indicates the Fourier transform on $ \mathbb Z ^{d}$, and the notation 
identifies the operator $ C _{\lambda } ^{a/q}$, and the kernel. All our operators are convolution operators 
or maximal operators formed from the same.  
The function $ \psi $ is a radial Schwartz function on $ \mathbb R ^{d}$ which satisfies 
\begin{equation}\label{e:psi}
 \mathbf 1_{  \lvert   \xi \rvert  \leq 1/2  } \leq \widetilde \psi  (\xi ) \leq \mathbf 1_{\lvert  \xi \rvert \leq 1 }. 
\end{equation}
The function $ \widetilde  \psi _q (\xi ) = \widetilde \psi (q \xi )$.  
The uniform measure on the sphere of radius $ \lambda $ is denoted by   $ d \sigma _{\lambda }$ and 
$ \widetilde {d \sigma _{\lambda }} $ denotes its Fourier transform computed on $ \mathbb R ^{d}$.  
The standard stationary phase estimate is 
\begin{equation} \label{e:stationary}
 \lvert  \widetilde{d \sigma _1} (\xi ) \rvert \lesssim \lvert  \xi \rvert ^{- \frac{d-1}2}. 
\end{equation} 

We have this estimate, stronger than what we need, from \cite{MSW}*{Prop. 4.1}:  For all $ \Lambda \geq 1$, 
\begin{equation} \label{e:E<}
\bigl \lVert \sup _{\Lambda \leq \lambda \leq 2 \Lambda } \lvert  E_{\lambda }  \cdot  \rvert \bigr\rVert_ {2\to 2 } 
\lesssim \Lambda ^{\frac{4-d}2}. 
\end{equation}
Our first contribution to $ M_2$ is 
$
M _{2,1}  =   \lvert  E_{\tau } f \rvert 
$.
This clearly satisfies the second half of \eqref{e:Ms}. 

It remains to bound 
$
    C _{\tau }  f    
$, requiring further contributions to $ M_1 $ and $ M_2$. 
Recall the estimate below, which is a result of Magyar, Stein and Wainger \cite{MSW}*{Prop. 3.1}. 
\begin{equation}  \label{e:MSWfactor}
\bigl\lVert \sup _{\lambda > q} \lvert  C ^{a/q} _{\lambda }  f  \rvert \bigr\rVert _{2} 
\lesssim q ^{- \frac d2} \lVert f\rVert_2 . 
\end{equation}
It follows that 
\begin{equation}  \label{e:Caq}
\sum_{q > N} \sum _{a \in \mathbb Z _q ^{\times }}  
\lVert    C ^{a/q} _{\tau }  f   \rVert _2 \lesssim N ^{ - \frac{d-4}2} \lVert f\rVert_2. 
\end{equation}
Our second contribution to $ M_2$ is therefore 
\begin{equation} \label{e:M22}
M_ {2,2} = 
 \sum_{ N < q \leq \lambda } \sum _{a \in \mathbb Z _q ^{\times }}  \lvert   C ^{a/q} _{\tau } f \rvert  . 
\end{equation}

We are left with the term below, which will be controlled with further contributions to $ M_1$ and $ M_2$. 
\begin{equation*}
\sum_{1\leq q \leq N} \sum_{a \in \mathbb Z _q ^{\times }} C ^{a/q} _{\tau  }f 
\end{equation*}
Decompose $ C ^{a/q} _{\lambda   } = C ^{a/q} _{\lambda   ,1} + C ^{a/q} _{ \lambda ,2}$ where 
we modify the definition of $c ^{a/q}_{\lambda } $ in \eqref{e:caq} as follows.  
\begin{equation*}
c ^{a/q} _{\lambda   ,1} (\xi ) 
= 
 \sum_{\ell \in \mathbb Z ^{ d}} G(a, \ell, q) \widetilde \psi_ {\lambda /N} (\xi - \ell /q) 
\widetilde {d \sigma _{\lambda }} (\xi - \ell /q) . 
\end{equation*}

The last contribution to $ M _{2}$ is 
\begin{equation*}
M _{2,3} = \Bigl\lvert  \sum_{1\leq q \leq N}  C ^{a/q} _{\tau ,2 } f\Bigr\rvert. 
\end{equation*}
When considering $C ^{a/q} _{\tau ,2 } $, the difference $ \widetilde \psi_ {q } (\xi )-\widetilde \psi_ {\lambda /N} (\xi )$ arises. But this is zero if $ \lvert  \xi \rvert < N/2 \lambda  $.  
Using the Gauss sum estimate \eqref{e:G<} and the stationary decay estimate \eqref{e:stationary}, we have 
by dominating a supremum by an $ \ell ^2 $ sum, 
\begin{align*}
\lVert M _{2,3}\rVert_2 ^2 
&\leq \sum_{k > N} \Bigl\lVert   \sum_{1\leq q \leq N} \sum _{a\in \mathbb Z _q ^{\times }} C ^{a/q} _{\lambda _k ,2 } f \Bigr\rVert_2 ^2 
\\
& \leq  N  \sum_{1\leq q \leq N} \sum_{k > N}\sum _{a\in \mathbb Z _q ^{\times }}q
\lVert     C ^{a/q} _{\lambda _k ,2 } f \rVert_2 ^2 
\\
& \leq N ^{2-d}  \sum_{1\leq q \leq N} q ^{2-d} \lesssim N ^{2-d}. 
\end{align*}
This is smaller than required. 

The principle point is the control of 
\begin{equation*}
M _{1,2, \tau }f =   \sum_{1\leq q \leq N}    \sum _{a\in \mathbb Z _q ^{\times }}  C ^{a/q} _{\tau ,1 } f, 
\end{equation*}
and here we adopt our notation for operators.  In particular, we examine the kernel for the convolution 
operator $ M _{1,2, \lambda  }$.   By a well known computation,   (See \cite{I}*{pg. 1415}, \cite{160904313}*{(42)}, or the detailed argument in \cite{MR4064582}*{Lemma 2.13}.)    
\begin{align}\label{e:mhat}
M _{1,2 ,  \lambda  }  (n) &  = K _{\lambda } (n) \cdot \mathsf C _{N} (\lambda ^2 - \lvert  n\rvert ^2  )  ,
\\
 \label{e:K} 
 \textup{where } \quad K _{\lambda } (n) & 
=  \psi _{\lambda /N} \ast d \sigma _{\lambda } (n)     , 
\\  \label{e:Ram}
\textup{and } \quad
 \mathsf C _{N} (n) 
&= \sum_{1\leq q \leq N} \mathsf c _{q} (n) = 
\sum_{1\leq n \leq N} \sum_{a \in \mathbb Z _{q} ^{\times } } e _{q} (a m).
\end{align}
The terms $ \mathsf c_q $ are  Ramanujan sums,
well-known for having more than square root cancellation.  
We need a further quantification of this fact.   We find this result in a paper by Bourgain  
 \cite{MR1209299}*{(3.43), page 126} 
and will give a short  proof for completeness. (Also see \cite{180906468}.)    We remark that the main result of \cite{MR2869206} gives a precise asymptotic for the expression below for $ j=2$. 
In particular, this result shows that the inequality below is sharp, up the $ \epsilon $ dependence. 

%%%%%%%%%%%%%%%%%%%%%%%%%%%%%% LEMMA LEMMA LEMMA
\begin{lemma}\label{l:Ram} Given $ \epsilon >0$ and integer $j$,   the inequality below holds 
for all integers   $ M > Q ^{j}$.  
\begin{equation} \label{e:Ram<}
  \Biggl[  \frac{1}M \sum_{n \leq M} \Bigl [ \sum_{q\leq Q} \lvert  \mathsf  c _{q} (n) \rvert \Bigr ] ^{j} 
  \Biggr] ^{1/j}
\lesssim   Q ^{1 + \epsilon }   .   
\end{equation}
\end{lemma}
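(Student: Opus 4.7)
The plan is to reduce the $L^j$ average to a standard divisor-counting estimate, using only a pointwise bound on Ramanujan sums and elementary sum swapping.

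I would begin with the classical pointwise bound $|\mathsf{c}_q(n)| \leq \gcd(q,n)$, which follows from the multiplicativity of $\mathsf{c}_q$ in $q$ together with the direct evaluation $\mathsf{c}_{p^a}(n) \in \{\,p^{a-1}(p-1),\; -p^{a-1},\; 0\,\}$ at prime powers (according to whether $v_p(n) \geq a$, $= a-1$, or $< a-1$). Combining this with the identity $\gcd(q,n) = \sum_{d \mid \gcd(q,n)} \varphi(d)$ and swapping the order of summation gives
\[
\sum_{q \leq Q} |\mathsf{c}_q(n)| \;\leq\; \sum_{d \mid n,\; d \leq Q} \varphi(d) \lfloor Q/d \rfloor \;\leq\; Q \cdot \tau_{\leq Q}(n),
\]
where $\tau_{\leq Q}(n) := \#\{d \mid n : d \leq Q\}$. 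The lemma is therefore reduced to showing $\tfrac{1}{M}\sum_{n \leq M} \tau_{\leq Q}(n)^j \lesssim Q^{j\epsilon}$.

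Next I would expand $\tau_{\leq Q}(n)^j = \#\{(d_1,\ldots,d_j) : d_i \mid n,\ d_i \leq Q\}$ and swap the $n$- and $d$-summations. For each fixed tuple, the inner count is $\#\{n \leq M : \mathrm{lcm}(d_1,\ldots,d_j) \mid n\} \leq M/\mathrm{lcm}(d_1,\ldots,d_j)$; the hypothesis $M > Q^j \geq \mathrm{lcm}(d_i)$ keeps this estimate non-vacuous. The problem thus reduces to estimating
\[
\mathcal{S}(Q,j) \;:=\; \sum_{d_1,\ldots,d_j \leq Q} \frac{1}{\mathrm{lcm}(d_1,\ldots,d_j)}.
\]

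To control $\mathcal{S}(Q,j)$, I would regroup by the value $L = \mathrm{lcm}(d_1,\ldots,d_j) \leq Q^j$: every $d_i$ divides $L$, so the number of tuples yielding a fixed $L$ is at most $\tau(L)^j$. Therefore
\[
\mathcal{S}(Q,j) \;\leq\; \sum_{L \leq Q^j} \frac{\tau(L)^j}{L} \;\lesssim_{j}\; (\log Q)^{2^j},
\]
by partial summation from the classical moment estimate $\sum_{L \leq N} \tau(L)^j \lesssim N(\log N)^{2^j-1}$. Since $(\log Q)^{2^j/j} \lesssim_{\epsilon} Q^{\epsilon}$, combining all the steps and taking $j$-th roots produces the asserted inequality $\lesssim Q^{1+\epsilon}$. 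The only non-elementary input is the classical moment bound for the divisor function; everything else is direct bookkeeping, and I do not anticipate any serious obstacle.
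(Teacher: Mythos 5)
Your proof is correct, and it takes a genuinely different and somewhat cleaner route than the paper's. Both arguments start from the H\"older bound $\lvert \mathsf c_q(n)\rvert \le (q,n)$, and both ultimately land on a divisor sum of the type $\sum_{L} \tau(L)^j/L \lesssim Q^{\epsilon j}$. The difference is in the middle. The paper handles the $j$-fold product $\prod_i (q_i,n)$ directly: it averages over a full period $\mathcal L(\vec q)$ (this is where the hypothesis $M>Q^j$ is genuinely used), proves a somewhat delicate multiplicative estimate $\sum_{n\le \mathcal L(\vec q)}\prod_i (q_i,n)\lesssim Q^{j+\epsilon}$ by reducing to prime powers, and then invokes $\sum_{\vec q}1/\mathcal L(\vec q)\lesssim Q^\epsilon$. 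You instead insert the Gauss identity $(q,n)=\sum_{d\mid (q,n)}\varphi(d)$ at the level of the single sum $\sum_{q\le Q}(q,n)$, which lets you extract the factor $Q$ immediately and reduce everything to the divisor-moment bound $\frac1M\sum_{n\le M}\tau_{\le Q}(n)^j\lesssim (\log Q)^{2^j}$. This sidesteps the paper's ``Third fact'' entirely, and the resulting bound is sharper (a power of $\log Q$ rather than $Q^\epsilon$). One small remark: your use of $\lfloor M/L\rfloor \le M/L$ actually does not require $M>Q^j$ at all — that hypothesis is load-bearing in the paper's periodicity argument but not in yours, so your statement that the hypothesis ``keeps this estimate non-vacuous'' is a slight misreading of your own proof; you in fact prove the inequality for all $M\ge 1$.
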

%%%%%%%%%%%%%%%%%%%%%%%%%%%%%% LEMMA LEMMA LEMMA

We postpone the proof of this fact to the end of this section.  We also need 

%%%%%%%%%%%%%%%%%%%%%%%%%%%%%% PROPOSITION PROPOSITION PROPOSITION
\begin{proposition}\label{p:K} For the kernel $ K _{\lambda }$ defined in \eqref{e:K}, we have 
 this maximal inequality, valid 
for any lacunary choice of radii $ \{\lambda _k\}$.  
\begin{equation}\label{e:zN}
\Bigl\lVert  \sup _{k > N }  K _{\lambda_k } 
\ast g \Bigr\rVert _{p} \lesssim  \lVert g\rVert_p, \qquad 1< p <2.  
\end{equation}
\end{proposition}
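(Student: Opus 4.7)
The kernel $K_\lambda=\psi_{\lambda/N}\ast d\sigma_\lambda$ is a Schwartz-smoothed spherical measure on $\mathbb R^d$ restricted to $\mathbb Z^d$, concentrated on an annular shell of width $\lambda/N$ about the sphere of radius $\lambda$, with Fourier transform $\widehat K_\lambda(\xi)=\widetilde\psi(\lambda\xi/N)\widetilde{d\sigma_1}(\lambda\xi)$ supported in $|\xi|\lesssim N/\lambda$. The plan is to adapt the proof strategy of Theorem~\ref{t:R} to the discrete setting, in three steps: an $\ell^2$ bound via a lacunary square function, a restricted weak type inequality for $1<p<2$ via a further-smoothing decomposition with an auxiliary parameter $L$, and real interpolation to conclude.

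For the $\ell^2$ bound, telescoping against $K_{\lambda_{N+1}}$ and applying Plancherel on $\mathbb T^d$, the stationary phase estimate \eqref{e:stationary}, and lacunarity will yield $\sup_\xi\sum_k|\widehat K_{\lambda_k}(\xi)-\widehat K_{\lambda_{k-1}}(\xi)|^2\lesssim 1$, exactly as in the square function argument for $M_2$ in Theorem~\ref{t:R}. For $1<p<2$ the target is the restricted weak type bound $\langle\sup_{k>N}K_{\lambda_k}\ast\mathbf 1_F,\mathbf 1_G\rangle\lesssim|F|^{1/p}|G|^{1/p'}$, where one may assume $|G|>|F|$ (the other case follows from $\ell^2$). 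For an integer parameter $L$, I would decompose $\sup_{k>N}K_{\lambda_k}\ast\mathbf 1_F\leq U_1+U_2$, where $U_1=\sup_{k>N}\Phi_L\ast K_{\lambda_k}\ast\mathbf 1_F$ with $\Phi$ a Schwartz bump satisfying $\widetilde\Phi\equiv 1$ on $|\xi|\leq 1/2$, and $U_2$ is the residual. The residual has Fourier multiplier vanishing for $|\xi|\leq 1/(2L)$, and a standard square function argument using stationary phase and lacunarity gives $\|U_2\|_2\lesssim L^{-(d-1)/2}|F|^{1/2}$.

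For $U_1$, I would linearize the sup via a stopping time $\tau$ and pairwise disjoint sets $\{S_k\}$, forming the linear operator $Tf=\sum_k\mathbf 1_{S_k}\Phi_L\ast K_{\lambda_k}\ast f$, and aim to prove $T^*:\ell^\infty(\mathbb Z^d)\to\mathrm{BMO}(\mathbb Z^d)$ with norm $O(\log L)$, mirroring the $\mathrm{BMO}$ argument of Theorem~\ref{t:R}: split $T^*$ according to the size of $\lambda_k$ relative to the side length $\ell(Q)$ of a discrete cube, handling near scales ($\lambda_k\leq\ell(Q)$) via $\ell^2$ and local support, far scales via kernel decay and center-of-cube subtraction using lacunarity, and the $O(\log L)$ intermediate scales bounded trivially by $\|\phi\|_\infty$. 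Interpolating with $\ell^2$ gives $\|U_1\|_p\lesssim\log L\cdot|F|^{1/p}$; combining with the $U_2$ estimate and optimizing $L$ as a small power of $|G|/|F|$ yields the restricted weak type bound, with the logarithmic loss absorbed by real interpolation in $p$. The hard part will be the $\mathrm{BMO}$ estimate for $T^*$: the discrete kernels $\Phi_L\ast K_{\lambda_k}$ require careful pointwise decay estimates at integer points away from the sphere of radius $\lambda_k$ (inherited from Schwartz decay of $\psi$ and $\Phi$), and the far-scale telescoping must track the interplay between the lacunarity of $\{\lambda_k^2\}$ and the smoothing parameter $L$.
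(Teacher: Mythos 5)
Your proposal is correct in spirit but takes a genuinely different and substantially longer route than the paper. You propose to redo the entire Bourgain-style argument of Theorem~\ref{t:R} directly in the discrete setting: $\ell^2$ via a lacunary square function, a restricted weak type bound for $1<p<2$ via a further smoothing with an auxiliary parameter $L$ and an $\ell^\infty\to\mathrm{BMO}$ estimate with logarithmic loss, and then interpolation. This can be made to work, but it is work the paper deliberately avoids. The paper's proof is a short transference argument: since $d\geq 5$ guarantees $s_\mu\simeq\mu^{d-2}$ for every $\mu^2\in\mathbb N$, the number of lattice points in the annulus $\{x:\,|\,|x|-\lambda\,|<\lambda/N\}$ is comparable to its Euclidean volume $\lambda^d/N$; since the kernel $K_\lambda=\psi_{\lambda/N}\ast d\sigma_\lambda$ varies on the scale $\lambda/N\geq 1$, the discrete convolution $K_{\lambda}\ast g$ is pointwise dominated (after extending $g$ to be piecewise constant on unit cubes) by a continuous smoothed spherical average, hence by a composition of the Hardy--Littlewood maximal function with the Euclidean lacunary spherical maximal function. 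The Euclidean operator is bounded on all $L^p(\mathbb R^d)$, $1<p<\infty$ (Theorem~\ref{t:R}), and the claim follows with constants independent of $N$. The trade-off is clear: your rerun of the Bourgain/BMO machinery is self-contained and would extend to situations where the lattice-to-continuum comparison is unavailable, but it re-proves a known continuous theorem inside the discrete proof and requires nontrivial discrete kernel and BMO bookkeeping; the paper's comparison argument is a few lines, exploits $d\geq 5$ exactly where it is needed (lattice point counting), and immediately yields the full range $1<p<\infty$, not just $1<p<2$. One point to flag in your sketch if you pursue it: in the square-function step you cannot telescope against the weak limit $\widehat K_{\lambda_k}(\xi)\to\mathbf 1_{\{\xi=0\}}$, which corresponds to an unbounded global average; you must anchor against a genuine approximate identity such as $\widetilde\psi(\lambda_{N+1}\,\cdot\,)$, whose associated maximal operator is controlled by Hardy--Littlewood.
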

%%%%%%%%%%%%%%%%%%%%%%%%%%%%%% PROPOSITION PROPOSITION PROPOSITION

%%%%%%%%%%%%%%%%%%%%%%%%%%%%%% PROOF PROOF PROOF
\begin{proof}
This follows by comparison to  lacunary averages on \(  \mathbb R ^{d}  \), which we can do since the inner and outer radii compare favorably, as indicated in Figure~\ref{f:convolve}.  
Let us elaborate.  Consider $ 1 \ll M \ll \lambda $, with $ \lambda /M \gg1$.  
The annulus  $ \textup{Ann} (M , \lambda ) = \{  x \in \mathbb R ^{d}\;:\;   \lvert  \lVert x\rVert - \lambda \rvert < \lambda /M \}$. Then, the volume of the annulus is comparable to $ \lambda ^{d}/M$. And, the number of lattice points is, 
\begin{align*}
\bigl\lvert   \mathbb Z ^{d} \cap \textup{Ann} (M , \lambda ) \bigr\rvert 
&=  \sum_{ \mu  ^2 \in \mathbb N  \;:\;  \lambda (1- 1/M) \leq \mu  \leq \lambda (1+ 1/M)}  
\lvert  \{n \in \mathbb Z ^{d} \;:\; \lvert  n\rvert = \mu  \}\rvert 
\\
& \simeq  \sum_{ \mu  ^2 \in \mathbb N  \;:\;  \lambda (1- 1/M) \leq \mu  \leq \lambda (1+ 1/M)} \mu  ^{d-2}  
\simeq \lvert  \textup{Ann} (M , \lambda ) \rvert.   
\end{align*}
The last equivalence holds as we are summing over approximately $ \lambda ^2 /M$ values of $ \mu $.  
In dimension $ d \geq 5$, we always have a good estimate for the number of lattice points on a sphere. 

\end{proof}
%%%%%%%%%%%%%%%%%%%%%%%%%%%%%% PROOF PROOF PROOF

%%%%%%%%%%%%%%% Figure
\begin{figure}
\begin{tikzpicture}
\draw[gray]  (0,0) circle (3cm); 
\draw[gray]  (0,0) circle (2.65cm); \draw[gray]  (0,0) circle (3.45cm);   
\draw[thick,*->] (0,0) -- (0,3) node[midway,right] {$ \lambda $ }; 
\draw[(-)] (2.65,0) -- (3.45,0) node[midway,above] {$ \lambda /N $};  
%\draw[(-)] (-2.65,0) -- (-3.45,0) node[midway,above] {$ q$};  
%\draw (4, 3.5)  node {$ \lVert d \sigma _{\lambda } \ast \zeta _{(\lambda 2 ^{-v})}\rVert _{\infty } % \lesssim 2 ^{v} \lambda ^{-d}$};  
%\draw (-5, -3)  node {$ \lVert  d \sigma _{\lambda } \ast \psi _{\lambda/N}\rVert _{\infty } % \lesssim  [ q \lambda ^{d-1}] ^{-1} $};  
\end{tikzpicture}
\caption{A sketch to indicate the estimates \eqref{e:zN}. 
The convolution $  d \sigma _{\lambda } \ast \psi _{\lambda/N}$ is essentially supported in an annulus 
around a sphere of radius $ \lambda $ of width about $ \lambda /N$. } 
\label{f:convolve}
\end{figure}
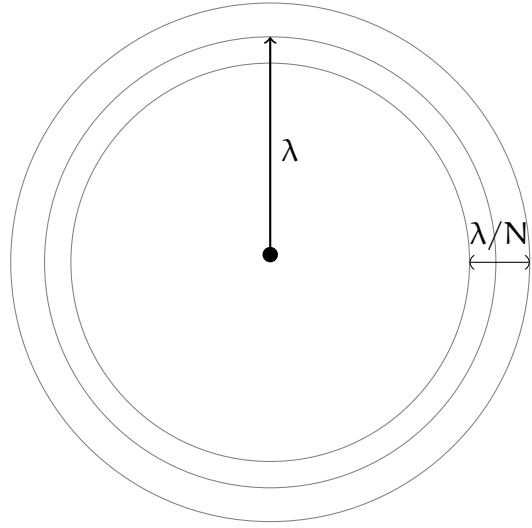
%%%%%%%%%%%%%%% Figure

Let us give the proof of $ \lVert M _{1,2, \tau } f\rVert _{1+ \epsilon } 
\lesssim N ^{1+ \epsilon } \lVert f\rVert _{1+ \epsilon }$, as required for \eqref{e:Ms}. 
We can estimate $ M _{1,2, \tau }f$ from the kernel estimate \eqref{e:mhat}.  We  use H\"older's inequality for a large even integer $ j$, and fixed $ \lambda _k$
\begin{align}
\Bigl\lvert 
\sum_{n\in \mathbb Z ^{d}}  & K_{\lambda }  (n) 
\mathsf C_N (\lambda ^2 - \lvert  n \rvert^2 ) f (x-n) 
\Bigr\rvert
 \\
 & \leq  \bigl[  K _{\lambda} \ast \lvert  f\rvert ^{j'}  (x)  \bigr] ^{1/j'} 
 \times 
 \Bigl[\sum_{n\in \mathbb Z ^{d}} K _{\lambda} (n) 
\lvert \mathsf C_N (\lambda ^2 - \lvert  n \rvert^2 ) \rvert ^{j}  \Bigr] ^{1/j}
\\&:= \Psi _{1, \lambda } f \cdot \Psi _{2, \lambda }.  \label{e:firstTerm} 
\end{align}
We pick $ j \simeq  10/ \epsilon   $, and claim that 
\begin{equation}\label{e:LacClaim}
\bigl\lVert \sup _{k} \Psi _{1, \lambda_k }f\bigr\rVert _{p} \lesssim   \lvert  F\rvert ^{1/p}, 
\qquad \sup _{k > N  } \Psi _{2, \lambda _k} \lesssim N ^{1+ \epsilon }.  
\end{equation}
Indeed,  we have $ 1< j' < p$. Therefore,  we can use \eqref{e:zN} to verify the first claim in \eqref{e:LacClaim}.

Concerning the second term in \eqref{e:firstTerm},  we turn to Lemma~\ref{l:Ram}, and argue that 
\begin{equation*}
 \sup _{k > N  } \Psi _{2, \lambda _k}   \lesssim N ^{ 1 + \epsilon  }
\end{equation*}
from which \eqref{e:LacClaim} follows.  

This follows from a case analysis, depending upon the value of $ n \in \mathbb Z  ^{d}$ in the definition of $ \Psi _{2, \lambda }$. 
For the first case, we sum close to the sphere of radius $ \lambda $.  We have 
\begin{equation}\label{e:CL1}
\sum_{\substack{n\in \mathbb Z ^{d}\\  \lvert  \lambda  -  \lvert  n\rvert \,\rvert \leq N ^{\epsilon } \cdot \lambda /N }} K _{\lambda} (n) 
\lvert \mathsf C_N (\lambda ^2 - \lvert  n \rvert^2 ) \rvert ^{j}   \lesssim N ^{ j (1+ \epsilon) }.  
\end{equation}
Indeed, we have  $ \lVert K _{\lambda } (n)\rVert _{\infty } \lesssim   N / \lambda ^{d}$, and $ K _{\lambda } (n)$ is radial,  and there 
are about $ \lambda ^{d-2}$ lattice points on each sphere.  
The left hand side above is at most 
\begin{align*}  
\frac{N} {\lambda ^2 }\sum_{  \lvert  r - \lambda ^2 \rvert \leq  3\lambda ^2 / N ^{1- \epsilon } }    
\lvert \mathsf C_N (\lambda ^2 - \lvert  n \rvert^2 ) \rvert ^{j}  \lesssim  N ^{ j (1+ 2 \epsilon )} 
\end{align*}
by \eqref{e:Ram<}.  

For the second case, we sum the remaining integer points   $ \lvert  n \rvert <   N   \lambda  $.  There are 
$ N  ^{d}\lambda ^{d}$ such points, and about $ N ^{2 d } \lambda ^2 $ radii that intersect the ball of radius $ N \lambda $.   
For each $ n$ with $ \lvert  \lambda - \lvert  n\rvert\, \rvert  >  N ^{\epsilon } \cdot \lambda /N $, 
note that 
\begin{align*}
K _{\lambda } (n)  \lesssim  \sup _{ \lvert  j\rvert >  N ^{\epsilon } \cdot \lambda /N } \psi _{\lambda /N} (j) 
\lesssim  (N/\lambda ) ^{d}  N ^{- \epsilon J} \lesssim  (N  ^2  \lambda ) ^{-d}, 
\end{align*}
where above we can choose an integer $ J > 4d/\epsilon $ to conclude the estimate, by a Schwartz tails argument. 
It is then clear that the inequality \eqref{e:Ram<} again applies.

The third and final case is $ \lvert  n\rvert >  N \lambda  $.   But this case is very easy, since 
\begin{equation*}
K _{\lambda } (n)  \lesssim   (  N \lambda / \lvert  n\rvert ) ^{-d}    ( N \lvert  n\rvert/ \lambda     ) ^{-J}
\end{equation*} 
Thus,  
\begin{align*}
\sum_{\lvert  n\rvert > N \lambda  }  K _{\lambda } (n) 
& = \sum_{r > (N \lambda ) ^2 }  (N/ \lambda ) ^{d - J}  r ^{- \frac {J-d+2}{2}} 
\\
& \lesssim      (N/ \lambda ) ^{d - J}   (N \lambda ) ^{ -2  \frac{J-d}2}  =  N ^{2d-2J -4}.    
\end{align*}
We can choose $ J$ large.  And, then use the trivial estimate $ \lvert  C_N (n)\rvert  \lesssim N ^2  $ to trivially complete this case.

\begin{proof}[Proof of Lemma~\ref{l:Ram}] 

We will marshal four facts.  
First,  $ n \to \mathsf c _{q} (n)$ is $ q$-periodic, and bounded by $ q$.  Moreover, we have the bound 
$ \lvert  \mathsf c _q (n)\rvert \leq (q,n) $.  To see this, recall that if $ q$ is a power of a prime $ p$, we have 
\begin{equation*}
\mathsf c _{p ^{k}} (n)  = \begin{cases}
0   &   p ^{k-1} \nmid n 
\\
- p ^{k-1}  & p ^{k-1} \mid n , p ^{k} \nmid n 
\\
p ^{k} (1-1/p)  & p ^{k} \mid n 
\end{cases}
\end{equation*}
We see that the conclusion holds in this case.  The general case follows since $ \mathsf c _{q} (n)$ is multiplicative in $ q$. 

\smallskip 

Second, for $ \vec q = (q_1 ,\dotsc, q_j) \in [1,Q] ^{k}$, let $ \mathcal L (\vec q)$ be the least common multiple of $ q_1 ,\dotsc, q_k$.  Observe that $ n \to  \prod _{j=} ^{k}\mathsf c _{q_j} (n)$
is periodic with period $ \mathcal L (\vec q)$. This, with the condition that $ M > Q ^j $,  implies that 
\begin{align}  \label{e:R3}
\frac{1}M \sum_{n \leq M}    \prod _{i=1} ^{j}\lvert  \mathsf c _{q_j} (n)\rvert  
\leq \frac{2 } {\mathcal L (\vec q)} \sum_{n\leq \mathcal L (\vec q)} \prod _{i=1} ^{j} (q_j,n).  
\end{align}

\smallskip 
Third,  for all $ \epsilon >0$, uniformly in $ \vec q \in  [1,Q] ^{k}$, 
\begin{equation}\label{e:R2}
 \sum_{n\leq \mathcal L (\vec q)} \prod _{i=1} ^{k} (q_j,n) \lesssim Q ^{k +  \epsilon }. 
\end{equation}
To see this, begin with the case of $ q = p ^{x}$, for prime $ p$ and $ x \geq 1$.  For integers $ k$, 
\begin{equation*}
\sum_{ n \leq p ^{x}} (p ^{x} ,n) ^{k} \lesssim p ^{xk + \epsilon }, 
\end{equation*}
as is easy to check.  
We need an extension of this.  Let $ x_1 ,\dotsc, x_t$ be distinct integers, and let $ k_1 ,\dotsc, k_t$ be integers.  There holds 
\begin{equation}  \label{e:RR1}
\sum_{ n \leq p ^{x_1}} \prod _{s=1} ^{t} (p ^{x_s} ,n) ^{k} \lesssim p ^{  \sum_{s=1} ^{t}x_s k_s + \epsilon }. 
\end{equation}
where above we assume that $ x_1 > x_2 > \cdots > x_t$.  As $ n \to (p ^{x_s} ,n) ^{k}$ is periodic with period $ p ^{x_s}$, one has 
\begin{equation*}
\sum_{ n \leq p ^{x}} \prod _{s=1} ^{t} (p ^{x_s} ,n) ^{k} 
= \prod _{s=1} ^{t}\sum_{ n \leq p ^{x_s}}  (p ^{x_s} ,n) ^{k} 
\end{equation*}
and the claim follows.

Turning to a vector $ \vec q$, write the prime factorization of $ \mathcal L (\vec q) = p_1 ^{x_1} \cdots p _t^{x_t}$.  Write each $ q_j = \prod _{s=1} ^{t} p _{s} ^{y_s}$, where $ 0\leq y_s \leq x_s$.  Then, 
for appropriate integers $ k _{y}$, we have 
\begin{equation*}
 \prod _{i=1} ^{k} (q_j,n)  
 =  \prod _{s=1} ^{t} \prod _{y=1} ^{x_s} (p_s ^{y} ,n) ^{k_y} .  
\end{equation*}
One must note that $  \prod _{y=1} ^{x_s} (p_s ^{y}) ^{k_y} \leq Q ^{k}$.  Again appealing to periodicity and using \eqref{e:RR1}, we can then write 
\begin{align*}
 \sum_{n\leq \mathcal L (\vec q)} \prod _{i=1} ^{k} (q_j,n)  
 & = \sum_{n\leq \mathcal L (\vec q)}  \prod _{s=1} ^{t} \prod _{y=1} ^{x_s} (p_s ^{y} ,n) ^{k_y} 
 \\
 & =  \prod _{s=1} ^{t} 
 \sum_{n\leq  p ^{x_s}}    \prod _{y=1} ^{x_s} (p_s ^{y} ,n) ^{k_y} 
 \lesssim  \prod _{s=1} ^{t}  p_s ^{\epsilon + \sum _{y=1} ^{x_s} y \cdot k_y } \lesssim Q ^{\epsilon +k}. 
\end{align*}

\smallskip
Fourth, we have the inequality below, valid for all $ \epsilon >0$
\begin{equation}\label{e:R4}
\sum_{ \vec q \in [1,Q]^j} \frac{1} {\mathcal L (\vec q)} \lesssim Q ^{\epsilon }. 
\end{equation}
Appealing to the divisor function $ d(r) = \sum _{q\leq r : q|r} 1  $, and the estimate 
$ d (r) \lesssim r ^{\epsilon }$, we have 
\begin{align*}
\sum_{ \vec q \in [1,Q]^j} \frac{1} {\mathcal L (\vec q)}   
& \leq \sum_{q \leq Q ^{j}}   \frac{d (q) ^{j}}q    \lesssim Q ^{\epsilon j}.  
\end{align*}
As $ \epsilon >0$ is arbitrary, we are finished.  

\smallskip 

We turn to the main line of the argument.  Estimate 
\begin{align*}
 \frac{1}M \sum_{n \leq M} \Bigl [ \sum_{  q\leq Q} \lvert  c _{q} (n) \rvert  \Bigr] ^{j}
 & = \frac 1 M 
 \sum_{n\leq M} \sum_{ \vec q \in [1,Q]^j}
  \prod _{i=} ^{j}\lvert  \mathsf c _{q_j} (n)\rvert  
  \\
&\stackrel {\eqref{e:R3}}\lesssim   
 \sum_{ \vec q \in [1,Q]^j}  \sum_{n \leq \mathcal L (\vec q)} 
  \prod _{i=} ^{j} (q_j ,n) 
  \\
  &\stackrel {\eqref{e:R2}}\lesssim   
 \sum_{ \vec q \in [1,Q]^j}  \frac{Q ^{\epsilon +j}} {\mathcal L (\vec q)} 
  \stackrel{\eqref{e:R4}}\lesssim Q ^{2\epsilon +j}. 
\end{align*}
This is our bound \eqref{e:Ram<}.  

\end{proof}
%%%%%%%%%%%%%%%%%%%%%%%%%%%%%% PROOF PROOF PROOF

%%%%%%%%%%%%%%%%%%%%%%%%%%%%%% SECTION  SECTION SECTION
%%%%%%%%%%%%%%%%%%%%%%%%%%%%%% SECTION  SECTION SECTION 
\section{The Highly Composite Case} \label{s:HC}

We follow the lines of the previous argument, but the underlying details are substantially different, 
as we are modifying Cook's argument \cite{180803822}, also see \cite{171104298C}.  
The essential features are due to Cook. We hope that this way of presenting the proof makes the argument more accessible.  

The  point is to show that for any $ 0< \epsilon  <1$, and  $ f = \mathbf 1_{F}$, a finitely supported function,  stopping time  $ \tau \;:\; \mathbb Z ^{d} \to \{\lambda _k\} $, 
and  any integer $ N$,  
we can choose $ M_1$ and $ M_2$ so that 
$ A _{\tau  } f \leq M_1 + M_2$ where 
\begin{align}\label{e:HCpre1}
\lVert M_1\rVert_p & \lesssim N ^{\epsilon } \lvert F\rvert ^{1/p}, 
\\ \label{e:HCpre2}
\lVert M_2\rVert_2  & \lesssim N ^{ - \frac{4-d}2 } \lvert F\rvert ^{1/2}. 
\end{align}
The implied constants depend upon $ \epsilon >0$.  This proves our Theorem~\ref{t:HC}.   
In the statement of this Theorem, recall that $ \log \mu _u / \log k \to \infty $, and that $ \lambda _k ^2  = \mu _k !$.

Fix $ \epsilon >0$. It suffices to prove \eqref{e:HCpre1} and \eqref{e:HCpre2} for sufficiently large $ N>N_0$.  Recall that $ \lambda _k ^2 = \mu _k !$. By our key assumption \eqref{e:log},  namely that $ \mu _k$ grows faster than any polynomial, there is a choice of $ N_0$ 
so that for all $ N>N_0$, we have $ \lambda  _{ [N ^{\epsilon }]}  > N ^{3 } $.  
For these integers, the first contribution to $ M_1$ is $ M _{1,1} f =  \mathbf 1_{\tau \leq \lambda _{[N ^{\epsilon }]}}  A _{\tau } f$. This clearly satisfies \eqref{e:HCpre1}.    
We can assume that $ \tau > \lambda _{[N ^{\epsilon }]}$ below.

The decomposition of the averages $ A _{\lambda_k }$ is different from that in \eqref{e:Afull}.  
Modify the definition in \eqref{e:caq} as follows.  Set $ Q= N!$, and define 
\begin{align}\label{e:B}
b _{\lambda } (\xi ) = \sum_{0\leq a <Q} \sum_{\ell \in \mathbb Z _Q ^{d}}
G (a, \ell, Q ) \widetilde \psi_{2Q} (\xi - \ell /Q) 
\widetilde {d \sigma _{\lambda }} (\xi - \ell /Q) .  
\end{align}
Note that this is a very big sum. In particular it is typical to restrict Gauss sums $ G (a, \ell, Q )$ to the case 
where $ \textup{gcd}(a, \ell ,Q)=1$, but we are not doing this here. 
Our first contribution to $ M_2$ is $ M _{2,1}f  =   \lvert  B _{\tau } f - A _{ \tau } f\rvert $.  Here, we are adopting our conventions about operators and their multipliers.  

%%%%%%%%%%%%%%%%%%%%%%%%%%%%%% LEMMA LEMMA LEMMA
\begin{lemma}\label{l:m22} 
We have the estimate $ \lVert M _{2,1} f\rVert_2 \lesssim N^ { \frac {4-d}2}  \lvert  F\rvert ^{1/2} $. 
\end{lemma}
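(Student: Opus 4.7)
The approach is to show that $B_\tau$ essentially coincides with the Magyar--Stein--Wainger major-arc approximation restricted to denominators $q \leq N$, with the remaining pieces controlled by the estimates already in use. Decompose $A_\tau = C_\tau^{\leq N} + C_\tau^{>N} + E_\tau$, where $C_\tau^{\leq N}$ and $C_\tau^{>N}$ split the sum in \eqref{e:Cfull} at the threshold $q = N$; then
$$B_\tau f - A_\tau f \;=\; \bigl(B_\tau f - C_\tau^{\leq N} f\bigr) \;-\; C_\tau^{>N} f \;-\; E_\tau f.$$
The tail $C_\tau^{>N}$ is handled by \eqref{e:MSWfactor}: summing $q^{-d/2}$ over $a \in \mathbb{Z}_q^\times$ (at most $q$ choices) and then over $q > N$ gives $\sum_{q > N} q^{1-d/2} \lesssim N^{(4-d)/2}$, since $d \geq 5$. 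The MSW error $E_\tau$ is disposed of by the standing reduction $\tau \geq \lambda_{[N^\epsilon]}$: the hypothesis $\mu_{[N^\epsilon]} > N^3$ forces $\lambda_{[N^\epsilon]}^2 \geq N^3!$, so a dyadic decomposition of $\sup_{\lambda \geq \lambda_{[N^\epsilon]}} |E_\lambda f|$ combined with \eqref{e:E<} yields an $\ell^2$ bound dominated by $\lambda_{[N^\epsilon]}^{(4-d)/2}$, which is negligible compared to $N^{(4-d)/2}$.

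The crux is identifying $B_\tau f$ with $C_\tau^{\leq N} f$ up to an acceptable error. For every $k$ with $\mu_k \geq N$ one has $Q = N! \mid \lambda_k^2$, so $e_q(-\lambda_k^2 a) = 1$ whenever $q \mid Q$; in particular this phase is trivial in the symbol of $C_\tau^{\leq N}$. Regrouping the $(a, \ell)$ sum in \eqref{e:B} by the reduced form of $a/Q$ and using the Gauss-sum identity
$$G(a, \ell, Q) = \begin{cases} G(a', \ell'', q), & a/Q = a'/q \text{ in lowest terms and } \ell = (Q/q)\ell'', \\ 0, & \text{otherwise},\end{cases}$$
which one verifies by splitting $n \in \mathbb{Z}_Q^d$ as $\tilde n + qm$ in \eqref{e:Gauss}, one obtains
$$b_\lambda(\xi) = \sum_{q \mid Q}\sum_{a' \in \mathbb{Z}_q^\times}\sum_{\ell'' \in \mathbb{Z}_q^d} G(a', \ell'', q)\,\widetilde\psi_{2Q}(\xi - \ell''/q)\,\widetilde{d\sigma_\lambda}(\xi - \ell''/q).$$
The divisors $q \mid Q$ with $q > N$ contribute a piece bounded by the same calculation used for $C_\tau^{>N}$. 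The surviving $q \leq N$ sum differs from the symbol of $C_\tau^{\leq N}$ only through the replacement $\widetilde\psi_q \mapsto \widetilde\psi_{2Q}$: the difference $\widetilde\psi_q - \widetilde\psi_{2Q}$ is supported where $|\xi - \ell''/q| \gtrsim 1/Q$, and there the stationary-phase bound \eqref{e:stationary} gives $|\widetilde{d\sigma_\lambda}(\xi - \ell''/q)| \lesssim (\lambda/Q)^{-(d-1)/2}$. Since $\lambda/Q \geq \sqrt{N^3!}/N!$ grows faster than any polynomial in $N$, this factor swamps all polynomial losses from the finite remaining sums in $q, a, \ell$.

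The main obstacle is the bookkeeping in the Gauss-sum reduction: one must verify that the collapse identity holds uniformly across the full sum \eqref{e:B}, which ranges over \emph{all} $a \in [0,Q)$ rather than only $\mathbb{Z}_Q^\times$, and that after regrouping the resulting symbol matches the symbol \eqref{e:caq} of $C_\tau^{\leq N}$ at exactly the denominators $q \leq N$. Once this algebraic identification is in hand, the $\ell^2$ bound follows by combining the three pieces above, with the stopping-time $\tau$ no harder to handle than a single scale because the super-exponential growth $\lambda_k^2 = \mu_k!$ makes all square-function summations over $k$ trivially convergent.
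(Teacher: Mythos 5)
Your proposal follows essentially the same route as the paper: decompose $A_\tau$ via \eqref{e:Afull}--\eqref{e:Cfull}, dispose of $E_\tau$ by \eqref{e:E<} and of the denominators $q>N$ by \eqref{e:Caq}, exploit that $e_q(-\lambda_k^2 a)=1$ for $q\le N$ and $k>N^\epsilon$ because $N!\mid\mu_k!$, and then reconcile $b_\lambda$ with the $q\le N$ major arcs via the Gauss-sum reduction \eqref{e:G=}, splitting the divisors $q\mid Q$ at the threshold $N$ and using the stationary-phase decay \eqref{e:stationary} together with the super-exponential growth of $\lambda_k$ to kill the cut-off mismatch $\widetilde\psi_q\leftrightarrow\widetilde\psi_{2Q}$. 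Your explicit vanishing form of the Gauss-sum collapse (nonzero only when $Q/q$ divides $\ell$, with $a/Q=a'/q$ in lowest terms) is equivalent to the paper's use of \eqref{e:G=} combined with the fact that $G(a,\ell,q)=0$ when $\gcd(a,q)>1=\gcd(a,\ell,q)$; the bookkeeping you flag as the main obstacle is precisely what the paper carries out in \eqref{e:DD} and the subsequent two cases.
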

%%%%%%%%%%%%%%%%%%%%%%%%%%%%%% LEMMA LEMMA LEMMA

%%%%%%%%%%%%%%%%%%%%%%%%%%%%%% PROOF PROOF PROOF
\begin{proof}
The difference $ M _{2,1} f$ is split into several terms. Using the expansion of $ A _{\lambda }$ from \eqref{e:Afull}, the expansion is 
\begin{align} 
 M _{2,1}f  & \leq    \lvert  E _{ \tau  }f\rvert 
 +  \sum_{ q >N}  \sum_{ a \in \mathbb Z _q ^{\times }}    \lvert  C ^{a/q} _{ \tau } f  \rvert 
 \\  \label{e:Diff1}
 & \qquad   +   \Bigl\lvert B _{\tau  } f  - \sum_{1\leq q \leq N} \sum_{a \in \mathbb Z _q ^{\times }}  e_q (- \tau ^2  a) C ^{a/q} _{\tau } f \Bigr\rvert . 
\end{align}
We bound the $ \ell ^2 $ norm of each of these terms in order.   

The first term on the right is bounded by appeal to \eqref{e:E<}.  The second term on the right is bounded by appeal to \eqref{e:Caq}.  Thus, it is the third term  \eqref{e:Diff1} that is crucial.  
We have this critical point about the term $e_q (- \tau ^2  a)$ appearing in \eqref{e:Diff1}.  
The stopping time $\tau$ takes values in $ \{\lambda_k : k > N^\epsilon\}$.  
  The highly composite nature of the $ \lambda _k$ shows that $ e_q (- \lambda ^2 _k a) \equiv 1$, 
for $ k >N ^{\epsilon }$, $ 1\leq q \leq N$, and $ a \in \mathbb Z _q ^{\times }$.  
(Indeed, this is the crucial simplifying feature of the highly composite case.) 
And so the term in \eqref{e:Diff1} is 
\begin{align*}
 B _{\tau  } f  - \sum_{1\leq q \leq N} \sum_{a \in \mathbb Z _q ^{\times }}   C ^{a/q} _{\tau } f . 
\end{align*}
For a fixed value of $ \tau $, the multiplier above is 
\begin{align} \label{e:DD}
\begin{split}
\sum_{0\leq  a' <Q} \sum_{\ell' \in \mathbb Z _Q ^{d}} &
G (a', \ell ', Q ) \widetilde \psi_{2Q} (\xi - \ell '/Q) 
\widetilde {d \sigma _{\lambda }} (\xi - \ell' /Q)  
\\&- \sum_{1\leq q \leq N} \sum_{a \in \mathbb Z _q ^{\times }} \sum_{\ell \in \mathbb Z ^{d}_q}
G (a, \ell ,q)  \widetilde \psi_q (\xi - \ell /q) 
\widetilde {d \sigma _{\lambda }} (\xi - \ell/q) . 
\end{split}
\end{align}

We need to closely track cancellation in the difference above. 
Recall the following basic property of Gauss sums.  For $ a', \ell' ,Q$ as above, 
we have 
\begin{equation}\label{e:G=}
G (a', \ell' , Q) = 
\begin{cases}
G (a'/ \rho , \ell '/ \rho , Q/\rho )  & \rho = \rho _{a', \ell'} = \textup{gcd} (a', \ell ', Q), 
\\
0   &  \textup{gcd} (a', Q) > \rho . 
\end{cases}
\end{equation}
It follows that the difference \eqref{e:DD} splits naturally between the two cases 
when  for fixed $a', \ell' $ we have $ Q/ \rho $ being either strictly bigger than $ N$ or less than or equal to $ N$. 

In the case of $ Q/ \rho \leq N$,   and $ \textup{gcd} (a',Q) = \rho $,  we have cancellation between the two terms.  Define 
\begin{equation*}
t _{a', \lambda } (\xi ) = 
 \sum_{ \substack {\ell' \in \mathbb Z _Q^{d } \\  Q/\rho _{a', \ell'}  \leq N}}
  G (a ', \ell ' ,Q)  \{ \widetilde \psi_{2Q}  (\xi - \ell '  /Q) - \widetilde \psi_{Q/\rho}  (\xi - \ell' /Q)  \} 
\widetilde {d \sigma _{\lambda }} (\xi - \ell/q). 
\end{equation*}
Notice that the difference 
$
 \{ \widetilde \psi_{2Q} (\xi  ) - \widetilde \psi_{Q/\rho}  (\xi )  \} 
$ 
is zero for $ \lvert \xi \rvert < (4Q)^{-1}$.  
We have by a square function argument and the stationary phase estimate \eqref{e:stationary}, 
\begin{align*}
  \sum _{k > N ^{\epsilon }} \lVert   T _{a', \lambda _k } f  \rVert_2 ^2 
   \lesssim \lVert f\rVert_2 ^2  \sum _{k > N ^{\epsilon }} (Q/ \lambda _k) ^{1-d} \lesssim Q ^{2 (1-d)} \lvert  F\rvert,  
\end{align*}
since we have $ \mu _{[N ^{\epsilon }]} > N ^{3}$, and so $ \lambda _k ^2 = \mu _k !  \geq N ^{3}!$, while $ Q= N!$.    This is summed over $0\leq a' < Q$ to give a smaller 
estimate than claimed. 

In the case of $ Q/ \rho > N$,   only the first half of \eqref{e:G=} is non-zero.  
Modification of the argument that leads to \eqref{e:Caq} will complete the proof.  Fix $ q>N$, and set 
\begin{equation*}
s _{\lambda } (\xi )
= \sum_{a' \in \mathbb Z _Q ^{d} }
\sum_{ \substack {\ell' \in \mathbb Z _Q^{d } \\  Q/\rho _{a', \ell'}  =q}}
  G (a ', \ell ' ,Q)   \widetilde \psi_{2Q}  (\xi - \ell '  /Q)  
\widetilde {d \sigma _{\lambda }} (\xi - \ell/q). 
\end{equation*}
This differs from $ \sum_{a \in \mathbb Z _q } c ^{a/q} _{\tau } $ by only the cut-off term $ \widetilde \psi_{2Q}  ( \cdot ) $.  This is however a trivial term, due to our growth condition on $ \lambda _ k$
and the stationary decay estimate \eqref{e:stationary}.  
Note that from the Gauss sum estimate \eqref{e:G=}, and an easy square function argument, 
and \eqref{e:G<},  we have 
\begin{equation*}
\lVert S _{\tau } f\rVert_2 \lesssim 
q ^{1- \frac{d}2}\lVert f\rVert_2 + 
\sum_{a \in \mathbb Z _q } \lVert C ^{a/q} _{\tau } f\rVert_2 . 
\end{equation*}
But then, we can complete the proof from \eqref{e:Caq}.   And the proof is finished.
\end{proof}
%%%%%%%%%%%%%%%%%%%%%%%%%%%%%% PROOF PROOF PROOF

It remains to  consider $ M _{1,2} f =   \mathbf 1_{\tau > \lambda _{[N ^{\epsilon }]}} \lvert  B _{\tau  } f\rvert $, where $ B _{\lambda } f$ 
is defined in \eqref{e:B}. We show that it satisfies the $\ell^p$ estimate  \eqref{e:HCpre1}, using a variant of the 
factorization argument of Magyar, Stein and Wainger \cite{MSW}.  The factorization is given by  $ B _{\lambda } = T _{\lambda } \circ U $, where the multipliers for these operators are given by 
\begin{align}
t _{\lambda } (\xi ) & = 
\sum_{0\leq a <Q} \sum_{\ell \in \mathbb Z _Q ^{d}}
 \widetilde \psi_{2Q} (\xi - \ell /Q) 
\widetilde {d \sigma _{\lambda }} (\xi - \ell /Q) , 
\\  \label{e:u}
\textup{and} \qquad u (\xi ) &= 
\sum_{0\leq a <Q} \sum_{\ell \in \mathbb Z _Q ^{d}}
G (a, \ell, Q ) \widetilde \psi_{Q} (\xi - \ell /Q) . 
\end{align}
Namely, the multiplier $ t _{\lambda }$ is $ 1/Q$-periodic in each coordinate, and has the spherical part of the multiplier. 
All the Gauss sum terms are in $ u (\xi )$.  
The fact that $ B _{\lambda }= T _{\lambda } \circ U $ follows from choice of $ \psi $ in \eqref{e:psi}.  

Concerning the maximal operator $    T _{\tau } \phi  $, we can appeal to the transference result of 
\cite{MSW}*{Prop 2.1} to bound $\ell ^{p}$ norms of this maximal operator.  
Since the lacunary spherical maximal function is bounded on all $ L ^{p} (\mathbb R ^{d})$, we conclude 
that 
\begin{equation*}
\lVert  T _{\tau } \phi  \rVert_ {\ell ^{p}} 
\lesssim \lVert \phi \rVert _{\ell ^{p}}, \qquad 1< p < \infty .  
\end{equation*}
Apply this with $ \phi = U f $.
It remains to see that $ U f $ is bounded in the same range.  But this is the proposition below, 
which concludes the proof of \eqref{e:HCpre1}, and hence the proof of Theorem~\ref{t:HC}. 

%%%%%%%%%%%%%%%%%%%%%%%%%%%%%% PROPOSITION PROPOSITION PROPOSITION
\begin{proposition}\label{p:U} For $ 1\leq p \leq 2$, we have $ \lVert U f \rVert_p \lesssim \lVert f\rVert_p$. 

\end{proposition}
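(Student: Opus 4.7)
The plan is to compute the convolution kernel of $U$ in closed form and then show that this kernel has uniformly bounded $\ell^{1}$ norm. Young's convolution inequality then immediately yields the $\ell^{p}$ bound for every $p$, in particular for $1\le p\le 2$.

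First, I would compute the kernel $K$ of $U$. Since $u$ is $\mathbb Z^{d}$-periodic (being a sum of Schwartz bumps centered at the grid $\ell/Q \in \mathbb T^{d}$), the kernel on $\mathbb Z^{d}$ is
\begin{equation*}
K(x) \;=\; \int_{\mathbb T^{d}} u(\xi)\, e(x\cdot\xi)\,d\xi.
\end{equation*}
Since the individual bumps $\widetilde\psi_{Q}(\,\cdot\,-\ell/Q)$ are supported on a scale $1/Q$, the torus integral unfolds to the Euclidean Fourier integral, and the inverse Fourier transform of $\widetilde\psi_{Q}(\xi-\ell/Q)$ at $x\in\mathbb Z^{d}$ is $e_{Q}(x\cdot\ell)\cdot Q^{-d}\psi(x/Q)$. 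Hence
\begin{equation*}
K(x) \;=\; Q^{-d}\psi(x/Q)\sum_{0\le a<Q}\sum_{\ell\in\mathbb Z_{Q}^{d}} G(a,\ell,Q)\,e_{Q}(x\cdot\ell).
\end{equation*}

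Second, I would collapse the double sum by orthogonality. Expanding $G(a,\ell,Q)$ via \eqref{e:Gauss} and performing the $\ell$-sum first gives $\sum_{\ell\in\mathbb Z_{Q}^{d}} e_{Q}(\ell\cdot(n+x)) = Q^{d}\mathbf 1_{n\equiv -x\,(Q)}$, which leaves a unique $n\in\mathbb Z_{Q}^{d}$ with $|n|^{2}\equiv |x|^{2}\pmod Q$. The $a$-sum then yields $\sum_{a=0}^{Q-1} e_{Q}(|x|^{2}a) = Q\,\mathbf 1_{Q\mid |x|^{2}}$, so that
\begin{equation*}
K(x) \;=\; Q^{1-d}\,\psi(x/Q)\,\mathbf 1_{Q\mid |x|^{2}}.
\end{equation*}

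Third — the essential step — I would estimate $\|K\|_{1}$. Dyadically decompose by $|x|\sim 2^{j}Q$, $j\ge 0$, and use the Schwartz decay $|\psi(x/Q)|\lesssim (1+|x|/Q)^{-M}$. In the $j$-th shell, the constraint $|x|^{2}\in Q\mathbb Z$ forces $|x|^{2}=Qm$ with $m\sim 4^{j}Q$, giving $\sim 4^{j}Q$ admissible values. For each, the hypothesis $d\ge 5$ supplies the sharp lattice count $s_{\lambda}\lesssim \lambda^{d-2}$, hence $\#\{x: |x|^{2}=Qm\}\lesssim (2^{j}Q)^{d-2}$. The shell therefore contributes $\lesssim 2^{jd}Q^{d-1}$ lattice points, and after the prefactor $Q^{1-d}$ and the decay factor, one obtains
\begin{equation*}
\|K\|_{1}\;\lesssim\; Q^{1-d}\sum_{j\ge 0} 2^{jd}Q^{d-1}(1+2^{j})^{-M} \;\lesssim\; 1.
\end{equation*}

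Young's inequality then gives $\|Uf\|_{p}\le \|K\|_{1}\|f\|_{p}\lesssim \|f\|_{p}$ for all $p$. The main obstacle is really the third step: the normalization $Q^{1-d}$ built into $K$ must exactly cancel the lattice-point count $Q^{d-1}$, and this cancellation depends on both the highly composite/periodic structure producing the indicator $\mathbf 1_{Q\mid |x|^{2}}$ and on the dimensional hypothesis $d\ge 5$ supplying the sharp bound on $s_{\lambda}$. Everything else is bookkeeping.
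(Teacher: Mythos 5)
Your proposal is correct and follows essentially the same route as the paper: compute the kernel of $U$ explicitly (arriving at $K(x) = Q^{1-d}\psi(x/Q)\mathbf 1_{Q\mid |x|^{2}}$, obtained here by first doing the $\ell$-sum and then the $a$-sum, while the paper does the $a$-sum then the $\ell$-sum — a cosmetic difference), then bound $\lVert K\rVert_{1}$ using the lattice point count $s_{\lambda}\simeq\lambda^{d-2}$ valid for $d\ge 5$, and conclude by Young's inequality. Your dyadic treatment of the Schwartz tails of $\psi$ is slightly more careful than the paper's informal restriction to $\lvert m\rvert\le Q$, but the content is the same.
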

%%%%%%%%%%%%%%%%%%%%%%%%%%%%%% PROPOSITION PROPOSITION PROPOSITION

%%%%%%%%%%%%%%%%%%%%%%%%%%%%%% PROOF PROOF PROOF
\begin{proof}
The $ \ell  ^2 $ estimate follows Plancherel and $ \lVert u\rVert _{\infty } \lesssim 1$.  It remains to verify the $ \ell ^{1}$ estimate. But, that amounts to the estimate $ \lVert U\rVert_1 = \sum_{m} \lvert  U (m)\rvert  \lesssim 1$. And so we compute 
\begin{align*}
U (-m) & = \int _{\mathbb T ^{d}} u (\xi ) e ^{-i m \cdot \xi } \; d \xi  
\\
&= \sum_{0\leq a < Q}  
\sum_{\ell \in \mathbb Z _Q ^{d}}
G (a, \ell, Q ) \int _{\mathbb T ^{d}}   \widetilde \psi_{Q} (\xi - \ell /Q)e ^{-i m \cdot \xi } \; d \xi 
\\
& = \psi _Q (m)
 \sum_{0\leq a < Q}  
\sum_{\ell \in \mathbb Z _Q ^{d}}
G (a, \ell, Q ) e ^{-i m  \cdot \ell /Q} 
\\
& = \frac { \psi _Q (m)}{Q ^{d}}
 \sum_{0\leq a < Q}   \sum_{n \in \mathbb Z _Q ^{d}}
\sum_{\ell \in \mathbb Z _Q ^{d}}   e _Q ( a \lvert  n\rvert ^2 +  (n-m) \cdot  \ell ) 
\\
& =   \frac { \psi _Q (m)}{Q ^{d-1}}    \sum_{n \in \mathbb Z _Q ^{d}}   \sum_{\ell \in \mathbb Z _Q ^{d}}
 e _Q (   (n-m) \cdot  \ell  )  \delta _{ \{ \lvert  n\rvert ^2  \equiv 0 \mod Q \} }
 \\
 & = Q \psi _Q (m)  \delta _{ \{ \lvert  m \rvert ^2 \equiv 0 \mod Q \} } . 
\end{align*}

And, then, recalling \eqref{e:psi}, it follows that 
\begin{align*}
\lVert U\rVert_1 = \sum_{m} \lvert  U (m)\rvert & \lesssim   Q ^{1-d}\sum_{ \lvert  m\rvert \leq Q}  
 \delta _{ \{ \lvert  m \rvert ^2 \equiv 0 \mod Q \} } 
 \\
 & \lesssim Q ^{1-d} \sum_{j =1} ^{ Q} \lvert  j Q\rvert ^{\frac{d-2}2} 
 \lesssim Q ^{-d/2} \sum_{j =1} ^{ Q} j ^{ \frac{d}2 -1} \lesssim 1.  
\end{align*}

\end{proof}
%%%%%%%%%%%%%%%%%%%%%%%%%%%%%% PROOF PROOF PROOF

A more general version of this last Lemma is proved in \cite{171104298C}*{Lemma 15}. 

%\bibliography{radon,sparse} 

\bibliographystyle{alpha,amsplain}	
% \bib, bibdiv, biblist are defined by the amsrefs package.
\begin{bibdiv}
\begin{biblist}

\bib{MR1209299}{article}{
      author={Bourgain, J.},
       title={Fourier transform restriction phenomena for certain lattice
  subsets and applications to nonlinear evolution equations. {I}.
  {S}chr\"odinger equations},
        date={1993},
        ISSN={1016-443X},
     journal={Geom. Funct. Anal.},
      volume={3},
      number={2},
       pages={107\ndash 156},
         url={https://doi-org.prx.library.gatech.edu/10.1007/BF01896020},
      review={\MR{1209299}},
}

\bib{MR812567}{article}{
      author={Bourgain, Jean},
       title={Estimations de certaines fonctions maximales},
        date={1985},
        ISSN={0249-6291},
     journal={C. R. Acad. Sci. Paris S\'er. I Math.},
      volume={301},
      number={10},
       pages={499\ndash 502},
      review={\MR{812567}},
}

\bib{MR2869206}{article}{
      author={Chan, T.~H.},
      author={Kumchev, A.~V.},
       title={On sums of {R}amanujan sums},
        date={2012},
        ISSN={0065-1036},
     journal={Acta Arith.},
      volume={152},
      number={1},
       pages={1\ndash 10},
         url={https://doi-org.prx.library.gatech.edu/10.4064/aa152-1-1},
      review={\MR{2869206}},
}

\bib{2017arXiv170301508C}{article}{
      author={{Cladek}, L.},
      author={{Krause}, B.},
       title={{Improved endpoint bounds for the lacunary spherical maximal
  operator}},
        date={2017-03},
     journal={ArXiv e-prints},
      eprint={1703.01508},
}

\bib{171104298C}{article}{
      author={{Cook}, B.},
       title={{Maximal Function Inequalities and a Theorem of Birch}},
        date={2017-11},
     journal={ArXiv e-prints},
      eprint={1711.04298},
}

\bib{180803822}{article}{
      author={{Cook}, B.},
       title={{A note on discrete spherical averages over sparse sequences}},
        date={2018},
     journal={ArXiv e-prints},
      eprint={1808.03822},
}

\bib{160904313}{article}{
      author={{Hughes}, K.},
       title={{The discrete spherical averages over a family of sparse
  sequences}},
        date={2016-09},
     journal={ArXiv e-prints},
      eprint={1609.04313},
}

\bib{I}{article}{
      author={Ionescu, Alexandru~D.},
       title={An endpoint estimate for the discrete spherical maximal
  function},
        date={2004},
        ISSN={0002-9939},
     journal={Proc. Amer. Math. Soc.},
      volume={132},
      number={5},
       pages={1411\ndash 1417},
  url={https://doi-org.prx.library.gatech.edu/10.1090/S0002-9939-03-07207-1},
      review={\MR{2053347}},
}

\bib{180906468}{article}{
      author={{Kesler}, R.},
       title={{$\ell^p(\mathbb{Z}^d)$-Improving Properties and Sparse Bounds
  for Discrete Spherical Maximal Means, Revisited}},
        date={2018-09},
     journal={ArXiv e-prints},
      eprint={1809.06468},
}

\bib{MR4064582}{article}{
   author={Kesler, R.},
   author={Lacey, M. T.},
   title={$\ell^p$-Improving Inequalities for Discrete Spherical Averages},
   journal={Anal. Math.},
   volume={46},
   date={2020},
   number={1},
   pages={85--95},
   issn={0133-3852},
   review={\MR{4064582}},
   doi={10.1007/s10476-020-0019-9},
}

\bib{MR4041278}{article}{
   author={Kesler, Robert},
   author={Lacey, Michael T.},
   author={Mena, Dar\'{\i}o},
   title={Sparse bounds for the discrete spherical maximal functions},
   journal={Pure Appl. Anal.},
   volume={2},
   date={2020},
   number={1},
   pages={75--92},
   issn={2578-5885},
   review={\MR{4041278}},
   doi={10.2140/paa.2020.2.75},
}

\bib{MR4041115}{article}{
   author={Lacey, Michael T.},
   title={Sparse bounds for spherical maximal functions},
   journal={J. Anal. Math.},
   volume={139},
   date={2019},
   number={2},
   pages={613--635},
   issn={0021-7670},
   review={\MR{4041115}},
   doi={10.1007/s11854-019-0070-2},
}

\bib{MR1949873}{article}{
      author={Lee, Sanghyuk},
       title={Endpoint estimates for the circular maximal function},
        date={2003},
        ISSN={0002-9939},
     journal={Proc. Amer. Math. Soc.},
      volume={131},
      number={5},
       pages={1433\ndash 1442},
  url={http://dx.doi.org.prx.library.gatech.edu/10.1090/S0002-9939-02-06781-3},
      review={\MR{1949873}},
}

\bib{MR0358443}{article}{
      author={Littman, Walter},
       title={{$L\sp{p}-L\sp{q}$}-estimates for singular integral operators
  arising from hyperbolic equations},
        date={1973},
       pages={479\ndash 481},
      review={\MR{0358443}},
}

\bib{MSW}{article}{
      author={Magyar, A.},
      author={Stein, E.~M.},
      author={Wainger, S.},
       title={Discrete analogues in harmonic analysis: spherical averages},
        date={2002},
        ISSN={0003-486X},
     journal={Ann. of Math. (2)},
      volume={155},
      number={1},
       pages={189\ndash 208},
         url={https://doi-org.prx.library.gatech.edu/10.2307/3062154},
      review={\MR{1888798}},
}

\bib{MR1617657}{article}{
      author={Magyar, Akos},
       title={{$L^p$}-bounds for spherical maximal operators on {$ \mathbf{Z}^n$}},
        date={1997},
        ISSN={0213-2230},
     journal={Rev. Mat. Iberoamericana},
      volume={13},
      number={2},
       pages={307\ndash 317},
         url={https://doi-org.prx.library.gatech.edu/10.4171/RMI/222},
      review={\MR{1617657}},
}

\bib{MR1388870}{article}{
      author={Schlag, W.},
       title={A generalization of {B}ourgain's circular maximal theorem},
        date={1997},
        ISSN={0894-0347},
     journal={J. Amer. Math. Soc.},
      volume={10},
      number={1},
       pages={103\ndash 122},
  url={http://dx.doi.org.prx.library.gatech.edu/10.1090/S0894-0347-97-00217-8},
      review={\MR{1388870}},
}

\bib{MR1955209}{article}{
      author={Seeger, Andreas},
      author={Tao, Terence},
      author={Wright, James},
       title={Endpoint mapping properties of spherical maximal operators},
        date={2003},
        ISSN={1474-7480},
     journal={J. Inst. Math. Jussieu},
      volume={2},
      number={1},
       pages={109\ndash 144},
  url={http://dx.doi.org.prx.library.gatech.edu/10.1017/S1474748003000057},
      review={\MR{1955209}},
}

\bib{MR0256219}{article}{
      author={Strichartz, Robert~S.},
       title={Convolutions with kernels having singularities on a sphere},
        date={1970},
        ISSN={0002-9947},
     journal={Trans. Amer. Math. Soc.},
      volume={148},
       pages={461\ndash 471},
         url={http://dx.doi.org.prx.library.gatech.edu/10.2307/1995383},
      review={\MR{0256219}},
}

\bib{JZ}{article}{
      author={Zienkiewicz, J.},
       title={Personal communication},
}

\end{biblist}
\end{bibdiv}

\end{document}